\documentclass{amsart}

\usepackage{amsmath,amsthm,amsfonts,amssymb,bm,graphicx, mathrsfs}
\usepackage{enumerate}
\usepackage{hyperref}
\usepackage{longtable}
\hypersetup{colorlinks=true,citecolor=blue,linkcolor=blue,urlcolor=blue,
pdfstartview=FitH }

\textwidth=6in \textheight=8.5in \topmargin 0cm \oddsidemargin 0cm
\evensidemargin 0cm

\theoremstyle{plain}
\newtheorem{theorem}{Theorem}
\newtheorem{prop}[theorem]{Proposition}
\newtheorem{lemma}[theorem]{Lemma}
\newtheorem{cor}[theorem]{Corollary}
\numberwithin{equation}{section}

\theoremstyle{definition}

\newtheorem*{remark}{Remarks}

\renewcommand{\geq}{\geqslant}
\renewcommand{\leq}{\leqslant}

\newcommand{\halfsum}{\varrho}
\newcommand{\hog}{S}

\DeclareMathOperator{\GL}{GL}
\DeclareMathOperator{\SL}{SL}

\newcommand{\dq}{/\!\!/}
\newcommand{\cptset}{\Omega}
\newcommand{\spectrbound}{\sqrt{5/2}}

\newcommand{\spherical}{\varphi}

\newcommand{\eps}{\varepsilon}

\newcommand{\resid}{\text{res}}
\newcommand{\cuspidal}{\text{cusp}}

\newcommand{\pp}{\text{pp}}
\newcommand{\ac}{\text{ac}}

\newcommand{\boldc}{\mathbf{c}}


\DeclareMathOperator{\spann}{span}




\DeclareMathOperator{\Sp}{Sp}

\DeclareMathOperator{\GSp}{GSp}
\DeclareMathOperator{\SO}{SO}

\DeclareMathOperator{\diag}{diag}


\DeclareMathOperator{\Tr}{Tr}

\DeclareMathOperator{\Ima}{Im}
\DeclareMathOperator{\Rea}{Re}




\DeclareMathOperator{\Ad}{Ad}
\DeclareMathOperator{\ad}{ad}






\newcommand\N{\mathbb{N}}
\newcommand\Q{\mathbb{Q}}
\newcommand\R{\mathbb{R}}
\newcommand\Z{\mathbb{Z}}
\newcommand\C{\mathbb{C}}

\newcommand{\mc}[1]{\mathcal #1}
\newcommand{\mf}[1]{\mathfrak #1}

\newcommand{\wt}{\widetilde}







\newcommand{\sceq}{\mathrel{\mathop:}=}
\newcommand{\seqc}{\mathrel{=\mkern-4.5mu{\mathop:}}}

\newcommand{\mat}[4]{\begin{pmatrix} #1&#2\\#3&#4\end{pmatrix}}

\newcommand{\textmat}[4]{\left(\begin{smallmatrix} #1&#2 \\ #3&#4
\end{smallmatrix}\right)}

\makeatletter
\DeclareRobustCommand\widecheck[1]{{\mathpalette\@widecheck{#1}}}
\def\@widecheck#1#2{%
    \setbox\z@\hbox{\m@th$#1#2$}%
    \setbox\tw@\hbox{\m@th$#1%
       \widehat{%
          \vrule\@width\z@\@height\ht\z@
          \vrule\@height\z@\@width\wd\z@}$}%
    \dp\tw@-\ht\z@
    \@tempdima\ht\z@ \advance\@tempdima2\ht\tw@ \divide\@tempdima\thr@@
    \setbox\tw@\hbox{%
       \raise\@tempdima\hbox{\scalebox{1}[-1]{\lower\@tempdima\box
\tw@}}}%
    {\ooalign{\box\tw@ \cr \box\z@}}}
\makeatother

\begin{document}

\author{Valentin Blomer}
\author{Anke Pohl}
\address{Mathematisches Institut, Bunsenstr. 3-5, 37073 G\"ottingen, Germany} \email{vblomer@math.uni-goettingen.de}  \email{anke.pohl@math.uni-goettingen.de}

 \title[The sup-norm problem on the Siegel modular space of rank two]{The sup-norm problem on the Siegel modular space\\ of rank two}

\thanks{First  author   supported  by the Volkswagen Foundation and a Starting Grant of the European Research Council. Second author supported  by  the Volkswagen Foundation.}

\keywords{Siegel Maa{\ss} forms, symplectic group,   sup-norms, trace formula, Hecke operators, arithmetic subgroups, quadratic forms, diophantine approximation, spherical functions}

\begin{abstract}   Let $F$ be a square integrable Maa{\ss} form on the Siegel upper half space $\mc H$ of rank $2$ for the Siegel modular group $\Sp_4(\Z)$  with Laplace eigenvalue $\lambda$. If, in addition, $F$ is   
 a joint eigenfunction of the Hecke algebra and $\cptset$ is a compact set in $\Sp_4(\Z)\backslash\mc H$, we show the bound $\| F|_{\cptset} \|_{\infty} \ll_{\cptset} (1+\lambda)^{1-\delta}$ for some global constant $\delta > 0$. \\
 As an auxiliary result of independent interest we prove new uniform bounds for spherical functions on semisimple Lie groups. 
\end{abstract}

\subjclass[2010]{Primary 11F46,  11F72, 11H55, 43A90; Secondary 11D75, 11F60, 22E40}

\setcounter{tocdepth}{2}  

\maketitle

 \section{Introduction}

Given a Riemannian manifold $X$ of finite volume, it is  natural to ask for various properties of the $L^2$-eigenfunctions of its Laplacian $\Delta$. These eigenfunctions are of importance in a range of fields such as quantum chaos and mathematical physics, harmonic analysis and -- if $X$ has some connection to arithmetic -- number theory. 
 The relation to physics is provided by the fact that the Laplacian coincides, up to scaling, with the Schr\"odinger operator of a freely moving particle on $X$, and hence the $L^2$-eigenfunctions of $\Delta$ are understood as bound states in physics.  Hence typical 
  questions ask about asymptotics (Weyl law), distribution and multiplicities of the eigenvalues of these $L^2$-eigenfunctions, and the distribution of their masses along increasing sequences of eigenvalues.    More refined questions include  the asymptotic behaviour of Wigner distributions or microlocal lifts and their possible quantum limits (quantum ergodicity, quantum unique ergodicity, arithmetic 
quantum unique ergodicity); often entropy bounds  play an important role in this context.  For detailed surveys of recent results see, e.g. \cite{Sarnak_survey, Zelditch_survey}.

In this paper we study the problem of obtaining \emph{pointwise} bounds in the spectral aspect for $L^2$-normalized eigenfunctions on the Siegel modular space of rank $2$,  the quotient space of the Siegel upper half space of rank $2$ by the Siegel modular group $\Sp_4(\Z)$. Such upper bounds for sup-norms provide a measure for the equidistribution of mass of the respective eigenfunction since they estimate the extent to which the eigenfunctions may localize in small sets. The problem of bounding sup-norms is also closely related to the multiplicity problem: if $V_{\lambda}$ denotes the eigenspace of the eigenvalue $\lambda$, then we have the inequality \cite{Sa} 
$$\dim V_{\lambda} \leq \text{vol}(X) \sup_{\substack{\| F \|_2 = 1\\ F \in V_{\lambda}}} \| F \|^2_{\infty}.$$
In particular, good bounds for sup-norms imply good bounds for multiplicities of eigenvalues, and conversely large eigenspaces prevent the possibility to obtain such bounds. 
 A classical example (see e.g.\ \cite{Fa, Iw}) is the sphere $X = S^2$; here all eigenvalues are of the form $\lambda = k(k+1)$ with $k \in \N_0$, and the corresponding eigenspaces are of dimension $2k+1$. One $L^2$-normalized element in $V_{k(k+1)}$ is given by $F(\theta) \sceq  ((2k+1)/4\pi)^{1/2}  p_k(\cos \theta)$ 
 where $\theta$ is the azimuth angle and $p_k$ is the $k$-th Legendre polynomial. In particular, $$\| F \|_{\infty} = \left(\frac{2k+1}{4\pi}\right)^{1/2} \asymp (1+\lambda_F)^{1/4}.$$ 
In general, for any compact Riemannian manifold and $L^2$-normalized Laplace eigenfunctions $F$, one has the bound \cite{Seeger_Sogge, Hoermander_bound, Sogge_Zelditch}
\begin{equation}\label{hoer_bound}
\| F \|_{\infty} \ll (1 + \lambda_F)^{(\dim X - 1)/4}, 
\end{equation}
and the example of the $n$-sphere shows that this bound is sharp in general. 
However, if $X$ carries additional symmetries, then it is reasonable to expect significantly stronger estimates.  An important instance of the effect of additional symmetries has been established by Sarnak \cite{Sa}: If $X$ is  a compact Riemannian locally symmetric space of rank $r$, then
\begin{equation}\label{generic}
 \| F \|_{\infty} \ll (1 + \lambda_F)^{(\dim X - r)/4}
 \end{equation}
for joint eigenfunctions $F$ with Laplace eigenvalue $\lambda_F$ of the algebra of all differential operators that are invariant under the Riemannian isometry group of $X$ \cite{Sa}. For non-compact spaces, things are more complicated, and the lower bound for congruence quotients of ${\rm PGL}_n(\R)$ in \cite[Theorem 1.2, Corollary 1.3]{BT} shows that \eqref{generic} cannot serve as a global generic bound for non-compact locally symmetric spaces $X$, at least if the rank is   large. However, when restricted to a fixed bounded subset $\cptset \subseteq X$, we have the corresponding generic bound
\begin{equation}\label{generic_nc}
  \| F\vert_{\cptset} \|_{\infty} \ll_{\cptset} (1 + \lambda_F)^{(\dim X - r)/4},
\end{equation}
 see \cite[(2.4)]{BT}, even in the slightly stronger form $\| F\vert_{\cptset} \|_{\infty} \ll_{\cptset} (1 + \lambda_F)^{(\dim X - r)/4} \| F\vert_{\cptset}\|_2$ if $\cptset$ has non-empty interior.

Many classical examples of Riemannian locally symmetric spaces (in particular, all irreducible Riemannian symmetric spaces of higher rank) are arithmetic and hence enjoy additional symmetries given by the Hecke operators, a commutative family of normal operators. For the joint $L^2$-eigenfunctions of the Hecke algebra and the invariant differential operators, multiplicity one results are known.   Hence in absence of obvious obstructions one might hope to be able to further improve the bound \eqref{generic} or \eqref{generic_nc} for these eigenfunctions.

The archetypical result of a power saving relative to the generic bound \eqref{generic} is due to Iwaniec and Sarnak \cite{IS, Sa} in the situation $X = \Gamma\backslash \Bbb{H}$ where $\Bbb{H}$ is the hyperbolic plane and $\Gamma \leq {\rm SL}_2(\R)$ is a  cocompact  arithmetic lattice or  ${\rm SL}_2(\Z)$. For $L^2$-normalized Hecke Maa{\ss} cusp forms $F$ they proved the bound $\| F \|_{\infty} \ll (1 + \lambda_F)^{5/24 + \varepsilon}$.

Similar results have been obtained for the Hecke-Laplace eigenfunctions on the sphere and ellipsoids \cite{VdK, BM} and on congruence quotients of hyperbolic 3-space \cite{BHM}. The underlying algebraic groups in these cases are $\SL_2(\R) = \SO_0(2, 1)$, $\SO(3)$ and $ \SL_2(\C) = \SO_0(3, 1)$, all of which have real rank at most $1$.  \\

In this paper we consider for the first time a group of \emph{real rank $2$}, the symplectic group 
\[
G\sceq\Sp_4(\R) \cong \SO_0(3, 2),
\]
and an irreducible (arithmetic) lattice in $G$, namely the Siegel modular group
\[
 \Gamma\sceq\Sp_4(\Z).
\]
The Riemannian symmetric space of non-compact type associated to $G$ is the Siegel upper half space $\mc H$ of rank $2$, and the Siegel modular space
\[
X \sceq \Gamma\backslash \mc H
\]
is a non-compact $6$-dimensional arithmetic Riemannian locally symmetric orbifold with one cusp. On $X$ (or $\mc H$) we consider the ($\Gamma$-invariant) joint eigenfunctions in $L^2(X)$ of the algebra of $G$-invariant differential operators on $\mc H$ and the Hecke algebra associated to $\Gamma$. These eigenfunctions are often called square integrable Hecke Siegel Maa{\ss} wave forms of genus (or degree) $2$ for $\Gamma$, containing the important subspace of Hecke Siegel Maa{\ss} cusp forms for $\Gamma$ which are those forms that decay rapidly towards the cusp of $X$. For shortness we refer to all of these forms just as \textit{joint eigenfunctions} in $L^2(X)$. For precise definitions and more details we refer to Sections~\ref{sec2} -- \ref{sec:SMF} below and standard monographs on Siegel modular forms, e.g.\@ \cite{Fr}. For the restriction of $F$ to any compact set $\cptset \subseteq X$ we prove the following power saving of the bound \eqref{generic_nc}:

\begin{theorem}\label{mainthm}
There exists $\delta>0$ such that for any compact subset $\cptset$ of $\Gamma\backslash\mc H$ and any $L^2$-normalized joint eigenfunction $F$ in $L^2(\Gamma \backslash \mathcal{H})$ with Laplace eigenvalue $\lambda_F$ we have
\[
  \| F\vert_{\cptset} \|_{\infty} \ll_{\cptset} (1+\lambda_{F})^{1-\delta}.
\]
\end{theorem}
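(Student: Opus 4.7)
The plan is to use the amplification method of Iwaniec--Sarnak, adapted to the rank-two setting of the symplectic group. Fix $x\in\cptset$ and choose a bi-$K$-invariant test function $k$ on $G=\Sp_4(\R)$ whose spherical transform is essentially the characteristic function of a small neighbourhood of the spectral parameter of $F$. Form the automorphic kernel $K(z,w)=\sum_{\gamma\in\Gamma}k(z^{-1}\gamma w)$ and its spectral expansion. Combining this with the Hecke algebra acting on $L^{2}(X)$, one arrives at an inequality of the shape
\[
\Bigl|\sum_{n}a_{n}\lambda_{F}(n)\Bigr|^{2}|F(x)|^{2}\ll \sum_{n_{1},n_{2}}a_{n_{1}}\overline{a_{n_{2}}}\sum_{\gamma}|k(x^{-1}\gamma x)|,
\]
where the inner sum runs over matrices in a suitable Hecke correspondence indexed by $n_{1},n_{2}$. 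Choosing an amplifier $(a_{n})$ supported on primes and encoding the Hecke eigenvalues of $F$, the diagonal $n_{1}=n_{2}$ recovers the trivial bound $(1+\lambda_{F})^{1}$; the task is to save a power in the off-diagonal contribution.

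The geometric side splits into two independent inputs. First, one needs uniform bounds on the spherical functions of $G$: because $G$ has real rank $2$, the naive estimate of $k(x^{-1}\gamma x)$ in terms of $KAK$-coordinates loses precisely in directions close to the walls of the positive Weyl chamber, and this is where the auxiliary spherical function bound announced in the abstract must enter. Second, one must count matrices $\gamma\in\Gamma$ in a prescribed Hecke class for which $x^{-1}\gamma x$ lies in a prescribed small region. Via the exceptional isogeny $\Sp_{4}(\R)\cong\SO_{0}(3,2)$, this becomes a problem about integral representations of indefinite quadratic forms by a fixed indefinite form, subject to congruence and size constraints coming from the Hecke level and from $x$. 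Diophantine approximation on the associated affine quadrics, combined with standard divisor-type estimates, is the natural tool here.

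The crucial technical step will be to show that the number of Hecke elements of level $n$ mapping $x$ back into a small ball around itself saves a fixed power of $n$ over what a trivial lattice-point count predicts, uniformly for $x$ in $\cptset$; the uniformity forces $\cptset$ to stay away from the cusp, where degenerate upper-triangular matrices proliferate and the argument breaks down. Once such a power saving is available, one balances the amplifier length against the geometric saving and the rank-two spherical function bound to obtain $\|F|_{\cptset}\|_{\infty}\ll_{\cptset}(1+\lambda_{F})^{1-\delta}$ for some absolute $\delta>0$. The main obstacle, genuinely new compared to the rank-one treatments of \cite{IS,BHM,BM}, is the counting problem for $\Sp_{4}(\Z)$-matrices whose $KAK$-parameters satisfy two simultaneous smallness conditions: a careful stratification by how close $x^{-1}\gamma x$ is to each Weyl wall, matched with the refined spherical bound in each stratum, is what I expect to require the most work.
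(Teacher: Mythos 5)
Your high-level strategy -- amplified pre-trace formula, a test function whose spherical transform concentrates near the spectral parameter, uniform spherical-function bounds, and a Diophantine count of Hecke matrices $\gamma$ with $x^{-1}\gamma x$ near $K$ -- is exactly the paper's. However, two of your implementation ideas diverge from what the paper does, and one of them would run into trouble if pursued literally.

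First, passing to the isogeny $\Sp_4(\R)\cong\SO_0(3,2)$ and phrasing the count as ``integral representations of indefinite quadratic forms by a fixed indefinite form'' is the wrong framing. The key structural fact the paper exploits is that the stabilizer $K$ is \emph{compact}: for $k\in\sqrt{m}\,K$ one has both $k^\top k=mI_4$ and $k^\top J k=mJ$, and conjugating by $g$ turns the first relation into $\gamma^\top Q\gamma=mQ+O(\delta m)$ with $Q=(gg^\top)^{-1}$ \emph{positive definite}. The resulting binary quadratic equations (for $(r_3,r_4)$ given $(r_1,r_2)$, then for $(s_2,s_4)$ after solving the linear/symplectic constraints) have positive definite homogeneous parts, which is precisely what makes the divisor-bound count $O_\varepsilon(H^\varepsilon)$ of Lemma~\ref{lem2} applicable, together with the Dirichlet-approximation step to handle the $\delta$-thickening. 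An indefinite binary form can represent an integer infinitely often, so the analogous step in your proposed orthogonal picture does not go through as stated; you would have to recover the hidden positive definiteness coming from $K$ before any divisor argument could be applied.

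Second, the stratification ``by how close $x^{-1}\gamma x$ is to each Weyl wall, matched with the refined spherical bound in each stratum'' is unnecessary and not what the paper does. The paper proves a single clean estimate (Theorem~\ref{sphericalbound}), $\varphi_\xi(\exp H)\ll(1+\|\lambda\|\,\|H\|)^{-1/2}$, uniform in the direction of $H$ and in $\lambda$ including near the Weyl walls, using a linearization of the phase and a weak van der Corput lemma of order $\le 2$. On the geometric side one then needs only a two-case dichotomy on $\|C(g^{-1}\tilde\gamma g)\|$: small Cartan projection goes into Proposition~\ref{prop1}, large Cartan projection uses the decay in \eqref{mainbound} and a trivial count. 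Also note that the delicacy near Weyl walls is in the \emph{spectral} parameter $\lambda$ (where Marshall's bound degenerates) and near the identity in the argument (where DKV's bound degenerates), not in the $KAK$-direction of $\gamma$ as your phrasing suggests; your intuition of where the loss occurs is slightly misplaced, though the remedy you point to (a more uniform spherical bound) is the right one. Finally, the amplifier itself requires more care than ``supported on primes'': the paper uses $l,l^2,l^4$ and must decompose $T(p^r)^2$ into individual double cosets $T^{(2s)}_{0,b}(p)$ with explicit coefficient bounds \eqref{squarefinal} to control the geometric side, which is a nontrivial piece of Hecke-algebra combinatorics you have not addressed.
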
 

\begin{remark} 
\begin{enumerate}[(1)]
\item  Our proof provides an explicit value for $\delta$ ($> 10^{-6}$), but we have not optimized the numerical value. 
\item An inspection of the proof shows that we get better bounds when the Langlands parameters $ (\mu_1, \mu_2)$ of $F$ approach  the walls of the Weyl chambers. Using  \eqref{cexplicit} directly in \eqref{mainbound} we obtain the bound
$$\ll \big((1+|\mu_1|)(1+|\mu_2|)(1+|\mu_1 + \mu_2|)(1 + |\mu_1 - \mu_2|)\big)^{1/2 - \delta}$$
in the situation of Theorem \ref{mainthm} (for some possibly different $\delta > 0$) which we  slightly simplified as $(1+|\mu_1|^2 +|\mu_2|^2)^{1-\delta}$. This is due to the fact that the Plancherel density drops close to the walls of the Weyl chambers, but on the other hand the behaviour of the spherical functions becomes more complicated as stationary points tend to blow up to non-trivial submanifolds. We deal with this problem in Theorem \ref{sphericalbound} below. 
\item  The statement of Theorem \ref{mainthm} includes the cuspidal and the residual spectrum, and in particular possible exceptional forms $F$ where the Ramanujan conjecture fails to hold. Moreover, our result and proof is  independent of any bounds towards the Ramanujan conjecture at finite places.
\item We do not investigate the behaviour in the cusp and treat only an arbitrary, but fixed compact piece of the manifold. Our principal method, based on a (pre-)trace formula, cannot distinguish between cuspidal and non-cuspidal constituents (in particular Eisenstein series) of the  automorphic spectrum, as they are treated evenly. Therefore  it is \textit{a priori} clear that our sup-norm bounds must deteriorate as we approach the cusp. Of course, cusp forms are rapidly decaying towards the cusps. In the classical situation of genus $1$ this can be quantified rather easily by the Fourier expansion, but it requires some highly non-trivial input such as precise uniform bounds for Bessel functions on the analytic side, and Rankin-Selberg theory combined with a famous and deep result of Hoffstein-Lockhart 
\cite{HL} on values of symmetric square $L$-functions at the edge of the critical strip on the arithmetic side. Such information is not available for genus $2$. As mentioned earlier, the  size of cusp forms  towards the cusps is, in higher rank,
 a subtle issue, and it is not even obvious that \eqref{generic} is true in our situation. 
\item For completeness, it should be mentioned that there are various related problems which have been studied recently in the context of Hecke eigenforms on hyperbolic $2$- and $3$-manifolds: on the one hand one can study \emph{lower} bounds   which may arise   from at least three different sources: (a) generic fluctuations that are slightly stronger than expected from the random wave model \cite{Mi1}; (b) degenerate behaviour of special functions which leads to peaks high in the cusp \cite{Sa, Te, BT}; and (c$ $) embedded submanifolds \cite{RS, Mi2} allowing eigenfunctions that are functorial lifts with non-generic behaviour. On the other hand, one can let the underlying space vary, and study the sup-norm of eigenfunctions on a sequence of covers $X_N$ of $X$ in terms of the volume of $X_N$ \cite{HT3, BM, BHM}. 
\end{enumerate}
\end{remark}

We now describe some of the methods involved in the proof of Theorem \ref{mainthm}. We view this as a prototype of a general higher rank situation, and we remark that the techniques can also be applied to treat related situations such as the case $G = GL(3)$ in a similar fashion. As pioneered in \cite{IS}, the proof starts with an amplified pre-trace formula. We highlight at this point some of the novel ingredients in this paper.

The amplifier we use here is based on not only the standard Hecke operators for the lattice $\Gamma=\Sp_4(\Z)$, but also involves those defined on individual double cosets. For the analysis of the amplifier, quite precise knowledge on their combinatorics in the Hecke algebra is required, which is rather unwieldy, but potentially useful in other situations. We refer to Sections~\ref{sec3} and \ref{secpre} for details and the explicit construction of this amplifier which also implements some of the recent  advances introduced in \cite{Ve} and \cite{BHM}.\\

The geometric side of the pre-trace formula yields a counting problem that reflects the algebraic structure of a maximal compact subgroup of the isometry group of the considered space. In our setup, the isometry group is $G=\Sp_4(\R)$, and any fixed maximal compact subgroup $K$ is isomorphic to $U(2)$ which is topologically (and almost algebraically) isomorphic to $S^1 \times S^3$. As one may therefore presume, the counting problem translates to twisted binary and quaternary quadratic forms. Roughly speaking, we need to bound the number of integral points in a $\delta$-neighbourhood of the intersection of 4 particular quadrics in 8 variables. Our bound, Proposition~\ref{prop1} below, is essentially best possible for sufficiently small $\delta$.  We refer to Section~\ref{sec7}, in particular to \eqref{mat1a} and \eqref{matalt}, for details. The counting techniques developed in this paper work in much greater generality. \\

Finally, we need uniform bounds for the inverse spherical transform of test functions localized at a given point   in $\mf a^*$ (related to a spectral parameter and with large distance to the origin). This brings us to the second main result that we would like to highlight at this point.  Bounds for inverse spherical transforms are often consequences of the decay of the spherical functions $\spherical_\lambda$ on $G/K$ for large parameter $\lambda\in \mf a^*$ (see Sections \ref{sec:bounds} and \ref{sec2} for the notation). Strong  bounds for $\spherical_{\lambda}$ have been obtained in particular by \cite[Theorem 11.1]{DKV} and \cite[Theorem 1.3]{Ma}, but neither of these bounds is sufficiently uniform for our purposes: the bound in \cite{DKV} requires the argument $\exp(H)$ of $\spherical_\lambda$ to stay away from the identity  in  $G\dq K = K\backslash G/K = A/W$ by a fixed amount, and the bound in \cite{Ma} requires the parameter $\lambda$ to stay away from the walls of the Weyl chambers by a fixed 
amount. Although it might be possible 
to remove these assumptions, we proceed differently   to prove the following 
uniform 
bound, which is also of independent interest:

\begin{theorem}\label{sphericalbound} 
Let $G$ be a real connected semisimple Lie group with finite center. Let $B \subseteq \mathfrak{a}$ be a bounded subset.  Then for any   $H \in B $   and  any $\xi = \lambda + i\eta \in \mf a^* + iC_\halfsum$, the elementary spherical function $\spherical_{\xi}$ of $G$ with parameter $\xi$ satisfies
\[
\spherical_{\xi}(\exp(H)) \ll_B \prod_j(1 + \|  \lambda_j \| \cdot \| H_j \|)^{-1/2},
\]
where $\lambda_j$ and $H_j$ denote the projections onto the simple factors of $\mathfrak{a}$ resp.\ $\mathfrak{a}^{\ast}$.  
\end{theorem}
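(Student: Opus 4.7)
The plan is to start from the Harish-Chandra integral representation
\[
\spherical_{\xi}(\exp H) \;=\; \int_K e^{(i\xi-\halfsum)(A(k\exp H))}\,dk,
\]
where $A\colon G\to\mf a$ denotes the Iwasawa $\mf a$-projection associated to a fixed decomposition $G=KAN$. Writing $\xi=\lambda+i\eta$ and taking absolute values, the cone hypothesis $\eta\in C_\halfsum$ ensures that the non-oscillatory factor $e^{-(\halfsum+\eta)(A(k\exp H))}$ remains bounded uniformly in $k\in K$: by Kostant's convexity theorem $A(k\exp H)$ lies in the convex hull of the Weyl orbit $W\cdot H$, which is contained in an $O_B(1)$-ball whenever $H\in B$. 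This already gives the trivial bound $|\spherical_\xi(\exp H)|\ll_B 1$, which suffices in the easy range $\|\lambda\|\cdot\|H\|\leq 1$, so from here on I assume $\|\lambda\|\cdot\|H\|\geq 1$ and must extract oscillatory cancellation.

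The task reduces to estimating
\[
\int_K e^{i\Phi_H(k)}\,b_H(k)\,dk,\qquad \Phi_H(k)=\lambda(A(k\exp H)),
\]
for a smooth amplitude $b_H$ with $O_B(1)$-bounded derivatives. A full-rank stationary-phase argument is unavailable because the critical set of $\Phi_H$ can become a positive-dimensional submanifold of $K$ whenever $\lambda$ or $H$ approaches a wall, which is precisely the degeneracy mentioned in Remark~(2) above. My strategy is transversal: produce a one-parameter circle subgroup of $K$ along which van der Corput's second-derivative test yields the bound $(\|\lambda\|\cdot\|H\|)^{-1/2}$, and finish by Fubini over the remaining $K$-directions. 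For each restricted root $\alpha$, the $\mathfrak{sl}_2$-triple $(X_\alpha,H_\alpha,X_{-\alpha})$ supplies a compact direction $Y_\alpha=X_\alpha-X_{-\alpha}\in\mf k$, and a rank-one Iwasawa calculation in the subgroup $G_\alpha$ yields an explicit trigonometric formula for $A(\exp(tY_\alpha)k_0\exp H)$ whose second $t$-derivative at $t=0$, when paired with $\lambda$, is a bounded multiple of $\langle\lambda,H_\alpha\rangle\,\alpha(H)$ up to a non-degenerate trigonometric factor of size $\asymp 1$ for generic $k_0\in K$. Choosing $\alpha$ to maximise $|\langle\lambda,H_\alpha\rangle|\cdot|\alpha(H)|$ --- a quantity $\gtrsim \|\lambda\|\cdot\|H\|$ since the coroots span $\mf a$ and the roots span $\mf a^*$ --- van der Corput then gives the desired fibre-wise bound.

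The main obstacle is establishing the second-derivative lower bound uniformly in the base point $k_0\in K$, since the Iwasawa projection is not left $K$-equivariant and the rank-one model calculation does not automatically transfer. I would handle this by covering $K$ by finitely many charts indexed by the root system, in each of which a suitable root direction provides the required bound, and carefully controlling the thin transition zones where all candidate root directions simultaneously degenerate. Showing that these exceptional zones have measure smaller than the savings $(\|\lambda\|\cdot\|H\|)^{-1/2}$, which will require a quantitative understanding of the local geometry of the Iwasawa projection at singular configurations, is in my view the most delicate step of the argument.
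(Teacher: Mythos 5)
Your proposal correctly identifies the two essential ingredients -- the Harish-Chandra integral representation and a van der Corput mechanism whose rate must degrade gracefully when stationary points spread out into positive-dimensional sets -- but the step you flag as ``the most delicate'' is in fact a genuine gap, and the bad-set strategy you sketch for it does not obviously close. You propose to cover $K$ by charts where some root direction $Y_\alpha$ yields a second-derivative lower bound, and then to argue that the transition zones where all candidate directions simultaneously degenerate have measure smaller than the target saving $(\|\lambda\|\|H\|)^{-1/2}$. But on such a zone the phase can be stationary to high order, and the contribution of a bad set of measure $\mu$ is $\ll \mu$, not automatically $o(1)$; worse, you have no \emph{a priori} control showing $\mu \ll (\|\lambda\|\|H\|)^{-1/2}$ uniformly as $\lambda$ approaches walls, since the bad set depends on $\lambda$ and $H$ in a way that is precisely the source of the difficulty. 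An excision argument of this shape would need a quantitative transversality statement that you have not supplied and that does not follow from the rank-one $\mathfrak{sl}_2$ computation alone. Moreover, attempting the second-derivative test directly on $\Phi_H(k)=\lambda(A(k\exp H))$ forces you to control a phase whose dependence on $H$ is nonlinear and whose critical structure shifts with $H$, which is what makes the uniformity as $\exp(H)\to e$ hard.

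The paper sidesteps both difficulties by a different decomposition. After reducing to $\|H\|\leq C_0$ small (the region $\|H\|\geq C_0$ is handled by citing the bound of Duistermaat--Kolk--Varadarajan, Theorem 11.1), one writes $H=\delta X$, $\lambda = \tau\langle\cdot,Y\rangle$ with $\|X\|=\|Y\|=1$ and \emph{linearizes the phase in} $\delta$: $\langle H(a_\delta k),Y\rangle = \delta f_{X,Y}(k) + O(\delta^2)$ with $f_{X,Y}(k)=\langle X,\Ad_k Y\rangle$. This reduces everything to the single fixed function $f_{X,Y}$ on $K$, whose critical set $C_{X,Y}=K_X W K_Y$ and Hessian-degenerate set $D_{X,Y}$ are classical objects, and the crucial structural input (DKV Propositions~1.2, 1.4, Corollary~1.5) is that $C_{X,Y}\cap D_{X,Y}=\varnothing$. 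Thus every $k\in K$ lies in a neighbourhood on which either a first or a second derivative of $f_{X,Y}$ is bounded below; there is no residual bad set to excise, and a soft compactness van der Corput lemma (Lemma~\ref{vdCorput}) gives the uniform $(\tau\delta)^{-1/2}$. That disjointness statement is exactly the missing idea in your write-up: rather than trying to run a second-derivative test everywhere and shrink a bad set, one should accept that the second-derivative test fails away from the critical set and the first-derivative test fails on it, and observe that these two failure modes never coincide.
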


Unlike the bounds in \cite{DKV, Ma}, the exponent is not best possible (we apply the stationary phase method only in one dimension), but it suffices for our application. The novelty here is the fact that the bound is    completely uniform as $\lambda$ approaches $\infty$ and/or walls of Weyl chambers and as $\exp(H)$ approaches the identity. We expect this result to have applications also in other situations. \\ 

\textbf{Acknowledgement:} The authors would like  to thank Jim Arthur, Farrell Brumley, Gergely Harcos and Ralf Schmidt for very helpful comments on various aspects of this work.

\section{Spherical functions}\label{sec:bounds}

\subsection{Spherical functions and spherical transform} 
Let $G$ be an arbitrary real connected semisimple Lie group with finite center, and let $\mf g$ denote its Lie algebra. Fix an Iwasawa decomposition $G = K A N$. Let $\mf a$ denote the Lie algebra of $A$, and let $H\colon G\to \mf a$ be the Iwasawa projection defined by 
\[
 g \in K \exp(H(g)) N \quad\text{for all $g\in G$.}
\]
Let $\mf a^*$ denote the dual of $\mf a$, and $\mf a^*_\C$ its complexification. Let $\Sigma$ be the set of (restricted) roots of $\mf g$, and fix a maximal subset $\Sigma^+$ of positive roots. Let $\halfsum$ denote the half-sum of the positive roots, weighted with multiplicity: 
\[
 \halfsum = \sum_{\alpha\in\Sigma^+} \dim\mf g_\alpha \cdot \alpha.
\]
Here, $\mf g_\alpha$ denotes the restricted root space of $\alpha$. Let $W$ denote the Weyl group of $G$. As usual, let $G\dq K$ denote the double quotient $K\backslash G/K$ and recall that it is isomorphic to $A/W$ by Cartan decomposition. For later purposes we define the Cartan  projection $C \colon G \to \mathfrak{a}/W$ via
\begin{equation}\label{defC}
g = k_1 \exp(C(g)) k_2  \quad\text{for appropriate $k_1, k_2\in K$.}
\end{equation}
The Killing form 
\[
 \langle X, Y\rangle \sceq \Tr(\ad X \circ \ad Y) 
\]
defines an inner product on $\mf a$, which carries over to $\mf a^*$ via root vectors. Thus, the induced inner product on $\mf a^*$ is given by  
$ \langle \lambda,\mu\rangle = \langle H_\lambda, H_\mu\rangle$ 
for $\lambda,\mu\in\mf a^*$, where $H_\lambda\in\mf a$ is determined by $\lambda(H) = \langle H, H_\lambda\rangle$ for all $H\in\mf a$. We denote the corresponding norms by $\|\cdot\|$. We note that the inner product on $\mf a^*$ extends in a straightforward way to a $\C$-bilinear symmetric form on $\mf a_\C^*$, the complexification of $\mf a^*$. We remark that the extension of $\langle\cdot,\cdot\rangle$ is not an inner product on $\mf a^*_\C$. For $\lambda=\nu+i\mu$ with $\nu,\mu\in\mf a^*$ we set $\|\lambda\|^2 \sceq \|\nu\|^2 + \|\mu\|^2$, which coincides with  $\langle\lambda,\lambda\rangle$ for $\lambda \in \mathfrak{a}^{\ast}$, but not for general $\lambda \in \mf a_\C^*$.\\

From Harish-Chandra's monumental work it is known that the spherical functions on $G$ are parametrized by $\mf a^*_\C/W$. For any $\lambda \in \mathfrak{a}_{\C}^*$, the associated spherical function $\spherical_\lambda\colon G\dq K \to\C$ is given by 
\begin{equation}\label{defspher}
\spherical_{\lambda}(g) \sceq \int_K e^{(-\halfsum + i\lambda)H(gk)}dk.
\end{equation}
It is well-known that any two such functions $\spherical_\lambda$ and $\spherical_\mu$ coincide if and only if $\lambda = w.\mu$ for some element $w\in W$, and $\spherical_\lambda$ is bounded (by $1$) if and only if $\lambda \in \mf a^* + i C_\halfsum$ (\cite{Helgason_Johnson}),  where $C_\halfsum$ denotes the convex hull of the points $w.\halfsum$, $w\in W$. 

Let $C^\infty_c(G\dq K)$ denote the space of compactly supported bi-$K$-invariant smooth complex-valued functions. For $f\in C^\infty_c(G\dq K)$, its spherical transform  is defined by 
\begin{equation}\label{sphericaltransform}
\widetilde{f}(\lambda) \sceq  \int_G f(g) \spherical_{-\lambda}(g) dg. 
\end{equation}
We briefly recall the Paley--Wiener Theorem and the Harish-Chandra Inversion Formula for $G$. For more details see e.g.\@ \cite{Gangolli} or \cite[Chap.\@ IV]{He}. For $R>0$ let $\mc H^R(\mf a^*_\C)$ denote the space of entire functions $f\colon \mf a^*_\C\to\C$ that satisfy 
\[
 f(\lambda)  \ll_N (1 + \|\lambda\|)^{-N} e^{R |\Ima \lambda|} \quad \text{for all $\lambda\in\mf a^*_\C$}
\]
for each $N\in\N_0$. Let $\mc H_W^R(\mf a^*_\C)$ denote the subspace of $W$-invariant functions in $\mc H^R(\mf a^*_\C)$. Then 
\[
 \mc H_W(\mf a^*_\C) \sceq \bigcup_{R>0} \mc H^R_W(\mf a^*_\C)
\]
is the space of Paley--Wiener functions. The inversion formula invokes the Harish-Chandra $\boldc$-function, which is the meromorphic function $\boldc\colon \mf a^*_\C\to\C$ given by the  Gindikin-Karpelevich product formula
\[
 \boldc(\lambda) = c_0 \cdot\prod_{\alpha\in \Sigma^+} \boldc_\alpha(\lambda).
\]
For any $\alpha\in \Sigma^+$, the map $\boldc_\alpha$ is given by  
\begin{equation}\label{GK}
 \boldc_\alpha(\lambda) \sceq \frac{2^{-i\lambda_\alpha}\Gamma(i\lambda_\alpha)}{\Gamma\left( \frac{i\lambda_{\alpha}}{2} + \frac{m_\alpha}{4} + \frac12\right) \Gamma\left( \frac{i\lambda_\alpha}{2} + \frac{m_\alpha}{4}\right)},\quad  \lambda_\alpha \sceq \frac{\langle \lambda,\alpha\rangle}{\langle \alpha,\alpha\rangle},
\end{equation}
where $m_\alpha =   \dim\mf g_\alpha$. The constant $c_0$ is determined by $\boldc(-i\varrho) = 1$. 

We endow $A/W \cong G\dq K$ with isometric metrics using the identification $\exp\colon \mf a \to A$. We let $\overline{B_R^A(o)}$ denote the closed ball in $A$ with radius $R$ about the identity $o \in A/W$. Note that the walls of Weyl chambers are null sets with respect to $d\lambda/|\boldc(\lambda)|^2$.

\begin{prop}[Paley--Wiener Theorem and Harish-Chandra Inversion Formula]\label{paley}
The spherical transform \eqref{sphericaltransform} is a bijection of $C^\infty_c(G\dq K)$ onto $\mc H_W(\mf a^*_\C)$. 
For each $R>0$, it restricts to a bijection of the space of functions in $C^\infty_c(G\dq K)$ with support in $\overline{B_R^A(o)}$ onto $\mc H^R_W(\mf a^*_\C)$. 
Its inverse is given by 
\begin{align*}
 f(g) &= \frac{1}{|W|} \int_{\mf a^*} \wt f(\lambda) \spherical_\lambda(g) \frac{d\lambda}{|\boldc(\lambda)|^2}.
\end{align*}
\end{prop}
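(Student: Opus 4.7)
The plan is to reduce Proposition~\ref{paley} to the classical Euclidean Paley--Wiener theorem on $\mf a$ via Harish-Chandra's Abel transform, following \cite{Gangolli} or \cite[Chap.~IV]{He}.

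First, I would rewrite the spherical transform in Iwasawa coordinates. From \eqref{defspher} and the bi-$K$-invariance of $f$, the inner $K$-integration in $\spherical_{-\lambda}$ is absorbed by the right translation $g\mapsto gk^{-1}$. Writing $g = \kappa a n$ in the Iwasawa decomposition and using the standard Haar measure formula $dg = a^{-2\halfsum}\, d\kappa\, da\, dn$, one obtains
\[
 \wt f(\lambda) = \int_A a^{-i\lambda}\, (\mc A f)(a)\, da, \qquad (\mc A f)(a) \sceq a^\halfsum \int_N f(an)\, dn.
\]
Thus, under the identification $A\cong\mf a$ via $\exp$, the spherical transform factors as the Euclidean Fourier transform on $\mf a$ applied to the Abel transform $\mc A f$. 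The $W$-invariance of $\wt f$, and hence of $\mc A f$, then follows from the known symmetry $\spherical_{w.\lambda} = \spherical_\lambda$.

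Second, the key technical input is that $\mc A$ restricts to a bijection $C^\infty_c(G\dq K) \to C^\infty_c(\mf a)^W$ that preserves radial supports in the sense that $\supp f \subseteq \overline{B_R^A(o)}$, with $\overline{B_R^A(o)}$ defined via the Cartan projection \eqref{defC}, if and only if $\supp \mc A f \subseteq \{H\in\mf a : \|H\|\leq R\}$. Support preservation follows from Kostant's convexity theorem, which locates $H(an)$ in the convex hull of the Weyl orbit of $\log a$. Bijectivity in higher rank requires an explicit inverse, constructed either by Rouvi\`ere's shift-operator formulas or by a contour-shift argument of Rosenberg carried out directly on the spherical side. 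Combining these properties with the classical Euclidean Paley--Wiener theorem $\mc F\colon C^\infty_c(\mf a)^W \xrightarrow{\sim} \mc H^R_W(\mf a^*_\C)$ then yields both claimed bijections.

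Third, for the inversion formula I would argue as follows. Euclidean Fourier inversion on $\mf a$ expresses $(\mc A f)(a)$ as an integral of $\wt f(\lambda) \cdot a^{i\lambda}$ against $d\lambda$. Inverting the Abel transform and symmetrizing over $W$ converts the exponential $a^{i\lambda}$ into an average of $\spherical_\lambda$, at the cost of introducing a spectral density. That this density is precisely $|\boldc(\lambda)|^{-2}$, with the normalizing constant $c_0$ fixed by $\boldc(-i\halfsum)=1$, is Harish-Chandra's Plancherel theorem for bi-$K$-invariant functions; it follows from the asymptotic expansion of $\spherical_\lambda(\exp H)$ on the positive Weyl chamber as $\sum_{w\in W} \boldc(w.\lambda)\, e^{(iw.\lambda - \halfsum)(H)}(1+\ldots)$ combined with the Gindikin--Karpelevich product \eqref{GK}. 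The main obstacle throughout is the bijectivity of $\mc A$ in higher rank: once that is secured, the remaining steps are a Euclidean computation supplemented by the explicit $\boldc$-function analysis.
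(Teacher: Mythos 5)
The paper gives no proof of Proposition~\ref{paley}: it is quoted as a standard result with a pointer to \cite{Gangolli} and \cite[Chap.~IV]{He}. Your sketch is a correct outline of precisely the argument in those references (factorization through the Abel transform, Kostant convexity for support preservation, the Gangolli--Rosenberg argument for surjectivity, and the Harish-Chandra expansion plus the Gindikin--Karpelevich formula for the Plancherel density $|\boldc(\lambda)|^{-2}$), so it takes essentially the same approach as the paper's source.
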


We see that bounds for the inverse spherical transform $f$ can be obtained from bounds of $\varphi_{\lambda}$. We now prepare for the proof of Theorem \ref{sphericalbound}. We let the elements of the real universal enveloping algebra $U(\mf k)$ act as differential operators on $K$. Thus, if $f\colon K\to \C$ is a smooth function and $Y=X_1\ldots X_d$ ($X_i\in\mf k$) is a homogeneous tensor of degree $d$ over $\mf k$, then 
\[
 f(k;Y) = \frac{d^r}{dt_1\ldots dt_d}\Big\vert_{t_1=\cdots=t_d=0} f( k\exp(t_1X_1)\cdots \exp(t_dX_d)).
\]

For the proof of Theorem~\ref{sphericalbound} we take advantage of the following weak van der Corput lemma, which is easily proven by applying \cite[Proposition~5, pp.~342-343]{St}.

\begin{lemma}\label{vdCorput}
Let $K$ be a compact Lie group with Lie algebra $\mf k$ and Haar measure $dk$. Let $\psi\colon K \to \C$ be a smooth function and $\phi\colon K \to \R$ be smooth and real-valued. Suppose that there exists a (finite) open cover $(U_j)_{j=1}^m$ of $K$, a constant $c>0$ and a constant $d\in\N$ such that for each $j\in\{1,\ldots, m\}$ there exists a homogeneous tensor $Y_j$ over $\mf k$ of degree at most $d$ with $|\phi(k;Y_j)|\geq c$ for all $k\in U_j$. Suppose further that there exists a constant $N > 0$ such that the $C^{d+1}$-norm of $\phi$ and the $C^1$-norm of $\psi$ are bounded by $N$. 
Then 
\[
 \left| \int_K \psi(k) e^{it\phi(k)} dk \right| \ll t^{-\frac1d} \quad (t \rightarrow \infty).
\]
The implied constant  depends on $\phi$ and $\psi$ and only through $c, d, N$. 
\end{lemma}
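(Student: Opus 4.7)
The plan is to reduce the estimate to Stein's one-dimensional van der Corput lemma \cite[Proposition 5, pp.~342-343]{St} by means of a smooth partition of unity and suitable local coordinates on $K$. First, fix a smooth partition of unity $\{\chi_j\}_{j=1}^m$ subordinate to the cover $\{U_j\}$, with $C^1$-norms depending only on the cover. It then suffices to bound each $|\int_K \chi_j\psi e^{it\phi}\,dk|$ by $\ll t^{-1/d}$. Fix $j$; let $Y_j = X_1 \cdots X_{d_j}$ with $d_j \leq d$, so $\phi(\,\cdot\,; Y_j)$ is (up to lower-order commutator terms) a left-invariant differential operator of order $d_j$ applied to $\phi$.

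\textbf{Reduction to pure-power tensors.} The idea is to pass from the mixed tensor $Y_j$ to a tensor of the form $X^{d_j}$ with a single unit vector $X \in \mf k$. In the symmetric algebra of $\mf k$ one has the polarization identity
\[
X_1\cdots X_{d_j} = \frac{1}{d_j!\,2^{d_j}} \sum_{\epsilon\in\{\pm1\}^{d_j}} \epsilon_1\cdots\epsilon_{d_j}\bigl(\epsilon_1 X_1 + \cdots + \epsilon_{d_j} X_{d_j}\bigr)^{d_j}.
\]
Interpreted in the universal enveloping algebra $U(\mf k)$, this yields $Y_j = \sym(Y_j) + L$ where $L$ has order at most $d_j-1$ and is a bounded expression in the $X_i$. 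Hence $\phi(k; Y_j) - (d_j!\,2^{d_j})^{-1}\sum_\epsilon \epsilon_1\cdots\epsilon_{d_j}\phi(k; Z_\epsilon^{d_j})$, with $Z_\epsilon := \sum_i \epsilon_i X_i$, is controlled by the $C^{d_j}$-norm of $\phi$ and hence by $N$. Using compactness of $U_j$ and a finite refinement, one arranges that on each piece of the refined cover some distinguished pure-power derivative $|\phi(k; X^{d_j})|$, with $X = X^{(j)}\in\mf k$ a fixed unit vector, is bounded below by a constant $c'>0$ that depends only on $c,d,N$.

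\textbf{Foliation and 1D van der Corput.} On the refined $U_j$, pick a base point $k_0 \in U_j$ and a smooth transverse chart $\Phi\colon V \to K$ (with $V\subseteq \R^{n-1}$ open, $n = \dim\mf k$) of a codimension-$1$ submanifold through $k_0$ complementary to $X = X^{(j)}$. Parameterize $U_j$ by
\[
\Theta(t, y) := \Phi(y)\cdot\exp(tX),\qquad (t,y)\in(-a,a)\times V,
\]
a local diffeomorphism; in these coordinates the Haar measure takes the form $a(t,y)\,dt\,dy$ with $a$ smooth and bounded above and below. The key identity is $\partial_t^r[\phi\circ\Theta](t,y) = \phi(\Theta(t,y); X^r)$ for every $r\geq 0$, since right multiplication by $\exp(tX)$ acts precisely as translation in $t$. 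In particular $|\partial_t^{d_j}[\phi\circ\Theta]| \geq c'$ on the chart. For each fixed $y \in V$, Stein's one-dimensional van der Corput lemma, applied to the inner integral
\[
I_j(y) := \int_{-a}^{a} \chi_j(\Theta(t,y))\,\psi(\Theta(t,y))\,a(t,y)\,e^{it\phi(\Theta(t,y))}\,dt,
\]
using the bound on $\|\phi\|_{C^{d+1}}$ and the $C^1$-bound on $\chi_j\psi a$, gives $|I_j(y)| \ll_{c',d,N} t^{-1/d_j} \leq t^{-1/d}$ uniformly in $y$. Integrating over the bounded set $V$ and summing over $j$ yields the asserted bound, with implied constant depending only on $c, d, N$ and the cover.

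\textbf{Main obstacle.} The only nontrivial step is the reduction to pure-power tensors: the polarization identity is a symmetric-algebra identity, whereas our derivative operator lives in $U(\mf k)$, so the passage produces a lower-order remainder that has to be uniformly absorbed into a refined cover. This bookkeeping is where the $C^{d+1}$-bound on $\phi$ is essential; once it is carried out, the remaining argument is a direct transfer of the classical one-dimensional van der Corput estimate to a compact Lie group via the exponential chart along a single one-parameter subgroup.
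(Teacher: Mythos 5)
Your strategy (partition of unity, reduction to a single one-parameter direction via an exponential chart, then the one-dimensional van der Corput lemma) is in the same spirit as the paper's intended argument, which simply passes to a Euclidean chart and invokes Stein's Proposition 5 directly — that proposition already handles a lower bound on a mixed partial $\partial^{\alpha}\phi$ in $\R^n$, and the passage from $\phi(\,\cdot\,;Y_j)$ to the mixed partials in a chart only requires comparing a fixed linear combination with bounded smooth coefficients. Your route reproves the content of Stein's Proposition 5 inside the Lie group rather than outsourcing it, which is legitimate but longer.

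There is, however, a genuine gap in your reduction to pure-power tensors. After one polarization step you have $Y_j = \mathrm{sym}(Y_j) + L$ with $L$ of order $\leq d_j-1$, and you observe that $\phi(\,\cdot\,;L)$ is bounded by $O(N)$. But nothing in the hypotheses says $c$ dominates $N$ — indeed typically $c\ll N$ — so the resulting inequality $\bigl|\text{(average of pure $d_j$-powers)}\bigr|\geq c-O(N)$ may be vacuous. Refining the cover cannot repair this: it lets you choose the direction $X$ locally, but it does not shrink $\phi(\,\cdot\,;L)$. The correct fix is to iterate the polarization on $L$, then on its remainder, and so on until order $1$; this writes $Y_j=\sum_i c_i\,Z_i^{r_i}$ with $1\leq r_i\leq d_j$ and bounded $c_i$, whence $\max_i|\phi(k;Z_i^{r_i})|\geq c/\sum_i|c_i|$. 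In particular the lower bound you actually obtain is on a pure power of \emph{some} degree $r\leq d_j$, not necessarily degree $d_j$ as you assert. This is harmless for the conclusion (since $t^{-1/r}\leq t^{-1/d}$), and for $r=1$ the standard subdivision into $O(N/c'+1)$ subintervals handles the monotonicity requirement in the first-derivative case, but as written the argument stops short of establishing the lower bound it needs and misidentifies where the $C^{d+1}$-control enters.
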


For the proof of Theorem~\ref{sphericalbound} we consider the integral in \eqref{defspher} as an oscillatory integral, where
the directions of the argument $a = \exp(H) \in A/W = G\dq K$ and $  \lambda$ are fixed and their length are the variables. The phase function is obviously linear in the length of $ \lambda$, and we will linearize it in the length of $H$.  

\subsection{Proof of Theorem \ref{sphericalbound}}
As recalled above, $\|\spherical_{\xi}\|_\infty \leq 1$ for all $\xi \in \mf a^* + i C_\halfsum$. 

Without loss of generality we may assume that $G$ is simple, so that it remains  to prove the bound $\phi_{\xi}(\exp(H)) \ll (\|\lambda \| \cdot \| H \|)^{-1/2}$ for $H$ in a bounded set $B$. We first observe that \cite[Theorem 11.1]{DKV} yields a stronger bound as long as $H$ stays away from the identity. We can therefore assume without loss of generality that $\| H \| \leq C_0$ for a sufficiently small constant $C_0$. We write $a = a_{\delta} = \exp(\delta X) \in A$ for some $0 < \delta < C_0$ and $\| X \| = 1$ and $\lambda = \lambda_{\tau} = \tau \langle \cdot, Y\rangle \in \mathfrak{a}^{\ast}$  for $\| Y \| = 1$. As $a$ and $\lambda$ vary, $X$ and $Y$ vary in a compact set. 
For $a \in A$ we set
\[
 F_{a,Y}\colon K \to \R,\quad F_{a,Y}(k) \sceq \langle H(ak), Y\rangle,
\]
and for  $\eta\in\mf a^*$ we let 
\[
 \psi_\eta\colon A\times K \to \R,\quad  \psi_\eta(a,k) \sceq e^{(-\halfsum - \eta)H(ak)}.
\]
As long as $\eta$ is fixed (e.g.\ contained in $C_{\varrho}$) and  $a$ is bounded, $\psi$ and its derivatives are uniformly bounded. With this notation we have 
 \begin{equation}\label{notation}
 \spherical_{\lambda_\tau + i \eta}(a_\delta) = \int_K \psi_{\eta}(a_\delta, k) e^{i\tau F_{a_\delta, Y}(k)} dk.
\end{equation}
To linearize the phase function $F_{a_\delta, Y}$ in the variable $\delta$, let $f_{X,Y}\colon K \to \R$ be defined as
\[
 f_{X,Y}(k) \sceq \langle X, \Ad_k(Y)\rangle. 
\]
By \cite[Proposition~5.5]{DKV}, 
\[
 \frac{d}{d\delta}\Big\vert_{\delta = 0} F_{\exp(\delta X), Y}(k) = f_{X,Y}(k),
\]
and hence
\[
 F_{a_\delta, Y}(k) = \delta  f_{X,Y}(k) + O(\delta^2).
\]
For the derivative and the Hessian of $F_{a_\delta, Y}$ this implies
\[
 F_{a_\delta, Y}(k;U) = \delta f_{X,Y}(k;U) + O(\delta^2),\quad F_{a_\delta, Y}(k;UV) = \delta f_{X,Y}(k;UV) + O(\delta^2)
\]
for all $U,V\in \mf k$.  By \cite[Proposition~1.2]{DKV}, the critical set of $f_{X,Y}$ is $C_{X,Y}\sceq K_XWK_Y$, where $K_X$ denotes the centralizer of $X$ in $K$. The corresponding subset of the Lie algebra $\mf k$ of $K$ is
\[
 \mf c_{X,Y} = \bigcup_{w\in W} \mf k_X + \mf k_{w.Y}, 
\]
where $\mf k_S = \{ Z\in\mf k \mid [S,Z] = 0\}$ for $S\in\mf a$. We claim that $\mf c_{X,Y}\not= \mf k$. Then the critical manifold is not all of $K$, and if   $D_{X,Y}$ denotes the subset of $K$ on which the Hessian of $f_{X,Y}$ vanishes,  \cite[Corollary~1.5]{DKV} yields
\begin{equation}\label{non-deg}
C_{X,Y} \cap D_{X,Y} = \emptyset. 
\end{equation}

To prove the claim $\mf c_{X,Y}\not= \mf k$, it suffices to show that $\mf k_X + \mf k_Y \not=\mf k$ for $\|X\|=1=\|Y\|$.  For $\alpha\in\Sigma^+$ set $\mf k_\alpha \sceq (\mf g_\alpha + \mf g_{-\alpha}) \cap \mf k$ and $\mf k_0 \sceq \mf g_0 \cap \mf k$. Then 
\[
 \mf k_S = \mf k_0 \oplus \bigoplus_{\alpha\in\Sigma^+, \alpha(S) = 0} \mf k_\alpha.
\]
Note that $\mf k_\alpha \not= \{0\}$ for any $\alpha\in\Sigma^+$. Let $\Pi$ denote the set of simple (restricted) roots, and let $\Pi_X$ denote the set of simple roots vanishing on $X$, and $\Pi_Y$ the set of simple roots vanishing on $Y$. Note that $|\Pi_X| \leq |\Pi|-1$ and $|\Pi_Y|\leq |\Pi|-1$. From the classification of the possible root systems (see, e.\,g., \cite[Appendix C]{Knapp_green}) one easily sees that there exists $\alpha\in\Sigma^+$ which is neither in the linear span of $\Pi_X$ nor of $\Pi_Y$.  Thus, $\alpha(X) \not=0\not=\alpha(Y)$ and hence  $\mf k_\alpha \not\subseteq \mf k_X + \mf k_Y$. 
This proves the claim and in particular \eqref{non-deg}.

Now let $U_C, U_D$ be disjoint open neighbourhoods in $K$ of $C_{X,Y}, D_{X,Y}$, respectively. Fix a basis $V_1,\ldots, V_m$ of $\mf k$ (here, $m=\dim\mf k$). Then, for some constant $C_1>0$, 
\begin{align*}
\max_{1 \leq i \leq m} \left| f_{X,Y}(k;V_i)\right| &\geq C_1 \qquad\text{for all $k\in U_C$,}
\\
\max_{1 \leq i, j \leq m} \left| f_{X,Y}(k; V_iV_j) \right| & \geq C_1 \qquad\text{for all $k\in U_D$.}
\end{align*}
By choosing $C_0$ sufficiently small, it follows that 
\begin{align*}
\max_{1 \leq i \leq m} \left| F_{a_\delta,Y}(k;V_i)\right| &\geq \delta C_1/2 \qquad\text{for all $k\in U_C$,}
\\
\max_{1 \leq i, j \leq m} \left| F_{a_{\delta},Y}(k; V_iV_j) \right| & \geq \delta C_1/2 \qquad\text{for all $k\in U_D$.}
\end{align*}
We rewrite \eqref{notation} as 
\begin{align*}
 \varphi_{\lambda_\tau+i\eta}(a_\delta) = \int_K \psi_\eta(a_\delta,k) \exp\Bigl(i\tau\delta (\delta^{-1}F_{a_\delta,Y}(k))\Bigr) dk. 
\end{align*}
By the above,  $K$ can be covered by finitely many open subsets on each of which  a first or second order partial derivative of $\delta^{-1}F_{a_\delta, Y}$ is uniformly bounded from below, and all its partial derivatives up to order 3 are uniformly bounded from above. Thus, by Lemma~\ref{vdCorput}, 
\[
 \varphi_{\lambda_\tau+i\eta}(a_\delta) \ll (\tau\delta)^{-\frac12} = (\|\lambda_\tau\|\cdot \|\log a_\delta\|)^{-\frac12}.
\]
\hfill $\square$

\section{The Siegel upper half plane and $\Sp_4(\R)$}\label{sec2}  

In the following sections we set up the scene for the proof of Theorem \ref{mainthm}. From now on we write 
\[
 G \sceq \Sp_4(\R) = \left\{ M\in\GL_4(\R) \ \left\vert\ MJM^\top = J\right.\right\}
\]
with $J \sceq \textmat{}{I_2}{-I_2}{}$. A standard choice for an Iwasawa decomposition $G =KAN =NAK$ is 
\begin{align*}
 K & = \left\{ \left(\begin{matrix} B & C\\ -C & B \end{matrix}\right) \in O(4)\right\},
\\
A & = \left\{\diag\left(e^{t_1}, e^{t_2}, e^{-t_1}, e^{-t_2}\right) \left\vert\ t_1, t_2 \in \R \vphantom{\diag\left(e^{t_1}, e^{t_2}, e^{-t_1}, e^{-t_2}\right)} \right.\right\}
\intertext{and}
N & = \left\{ \left( \begin{matrix} B & C\\ & B^{-\top}\end{matrix}\right) \ \left\vert\ BC^{\top} = CB^{\top},\ \text{$B$ unit upper triangular}\vphantom{\mat{A}{B}{}{A^{-\top}}}\right.\right\}.
\end{align*}
We note that $K\cong U(2)$ via
\begin{equation}\label{compact}
\mat{B}{C}{-C}{B}\mapsto B + iC.
\end{equation}
The Lie algebra of $A$ is 
\[
\mathfrak{a} \sceq \{ \diag(t_1,t_2, -t_1,-t_2)\mid t_1,t_2\in\R\}.
\]
For $j=1, 2$, we define $e_j \in \mf a^*$ by 
\[
e_j(\diag(d_1, d_2, -d_1, -d_2) ) = d_j.
\]
The $8$ (restricted) roots of $(\mathfrak{ g},\mathfrak{ a})$ are $\Sigma \sceq  \{ \pm e_1 \pm e_2,  \pm 2e_1, \pm 2e_2\}$ (a $C_2$ root system). One easily checks that the multiplicity of each root is $1$. A standard choice of positive roots is  
\[
\Sigma^+ \sceq   \{  e_1 \pm e_2,    2e_1,  2e_2\}.
\]
From \eqref{GK} one  checks that
\begin{equation}\label{cexplicit}
|\boldc(\lambda ) |^{-2} =  \left(\frac{\pi}{4}\right)^2 \gamma(\lambda_1) \gamma(\lambda_2)\gamma(\lambda_1 + \lambda_2)\gamma(\lambda_1- \lambda_2) \ll \| \lambda\|^4, \quad \text{where $\gamma(x) \sceq x \tanh(\pi x/2)$.}
\end{equation}

 Let 
\[
\mc H \sceq \left\{Z = X + i Y \in {\rm Mat}_2(\C)\ \left\vert\  Z = Z^{\top} \text{ and } Y > 0\right.\right\}
\]
denote the Siegel upper half space with Riemannian metric determined by the line element $ds^2 = {\rm Tr}(dZ\, Y^{-1} \, d\bar{Z} \, Y^{-1})$. Here, $Y>0$ means that the matrix $Y$ is positive definite. The action of $G$ on $\mc H$ is given by
\[
\begin{pmatrix} A & B\\C & D \end{pmatrix}.Z= (AZ + B)(CZ + D)^{-1},
\]
which induces an isometry of manifolds and $G$-spaces between $G/K$ and $\mc H$ via $gK \mapsto g.iI_2$. One representative in $G$ of a point $Z  = X+ iY \in \mc H$  is given by 
\begin{equation}\label{point1} 
  g = \begin{pmatrix} I_2 & X\\ & I_2\end{pmatrix}\begin{pmatrix}V & \\ & V^{-1}\end{pmatrix} \in G
\end{equation}
where $V$ is the unique symmetric positive definite matrix satisfying $V^{\top}V = Y$.

Let $\mathcal{D}(\mc H)$ denote the algebra of differential operators on $\mc H$ which are invariant under the left action of $G$. This is a commutative algebra of rank 2 which contains the positive definite Laplace-Beltrami operator $\Delta$ on $\mc H$. The Harish-Chandra isomorphism \cite[Chapter II.5]{He1} establishes a bijection between $\mathcal{D}(\mc H)$ and the algebra of Weyl group invariant polynomials in $\mf{a}^{\ast}_{\C}.$ In particular, the image of $\Delta$ is 
\begin{equation}\label{laplace}
\langle \halfsum, \halfsum \rangle + \langle \lambda, \lambda \rangle = \frac{5 + \lambda_1^2 + \lambda_2^2}{12} \in \C[\lambda_1, \lambda_2]
\end{equation}
for $\lambda=\lambda_1e_1+\lambda_2e_2\in\mf a^*_\C$.

\section{Hecke operators}\label{sec3}

If $\mathcal{M}$ is a set of matrices in 
\[
\GSp^+_4(\Q)  = \left\{ M\in \GL^+_4(\Q) \ \left\vert\ \text{$MJM^\top  = rJ$ for some $r\in \Bbb{Q}^{\ast}$}\right.\right\}
\]
that is left- and right-invariant under $\Gamma = \Sp_4(\Z)$ and a finite union $\bigcup_j \Gamma M_j$ of  left cosets (equivalently, $\mathcal{M}$ is a finite union of double cosets), then $\mathcal{M}$ defines the associated Hecke operator on functions $F :   \Gamma \backslash \mc H\rightarrow \Bbb{C}$ by
\[
T_{\mathcal{M}} \colon F \mapsto \sum_{j} F \left( \frac{1}{(\det M_j)^{1/4}} M_j\,\,\cdot\  \right).
\]
This definition extends in an obvious way to the vector space of formal linear combinations of such sets $\mathcal{M}$, and we obtain the symplectic Hecke algebra $\mathscr H$, a commutative algebra of  operators that commute with the elements in $\mathcal{D}(\mc H)$ and that are Hermitian with respect to the inner product
\begin{equation}\label{inner}
\langle F_1, F_2\rangle \sceq \int_{\Gamma \backslash \mc H} F_1(Z) \overline{F_2(Z)} \frac{dX\, dY}{(\det Y)^3}.
\end{equation}
We refer to \cite[Section 4]{Fr} or \cite[Section 3]{AZ} for proofs of these facts, historical remarks, and an introduction to the theory of Hecke algebras.   The composition of two such operators corresponds to the multiplication of two double cosets $\Gamma A \Gamma = \bigcup_j \Gamma A_j$,  $\Gamma B \Gamma = \bigcup_k \Gamma B_k$:
$$T_{\Gamma A \Gamma } \circ T_{\Gamma B \Gamma} \colon F \mapsto  \sum_{j, k}F\left(\frac{1}{(\det A_jB_k)^{1/4}}A_jB_k \,\cdot\ \right). $$
It is easy to see that
\begin{equation}\label{product}
T_{\Gamma A \Gamma } \circ T_{\Gamma B \Gamma} = \sum_{D} c_{D} T_{\Gamma  D \Gamma}
\end{equation}
where $D$ runs through a system of representatives of double cosets contained in $\Gamma A \Gamma B \Gamma$ and $c_D$ is the number of pairs $(j, k)$ such that $\Gamma D = \Gamma A_jB_k$. However, in explicit situations this formula is combinatorially cumbersome.

For $m \in \N$  let 
\begin{equation}\label{sm}
\hog(m) \sceq \left\{M \in \GSp^+_4(\Z) \ \left\vert\ M^{\top}JM =m J\right.\right\}.
\end{equation}
Then the $m$-th Hecke operator is given by $T(m) \sceq T_{\hog(m)}.$ For $(m_1, m_2) = 1$ we have  
\begin{equation}\label{coprime}
   T(m_1) T(m_2) = T(m_1m_2).
 \end{equation}  
For $r\in\N_0$, $0 \leq a \leq b \leq r/2$ and any prime $p$ define
$$T^{(r )}_{a, b}(p) \sceq T_{\Gamma \text{diag}(p^a, p^b, p^{r-b}, p^{r-a}) \Gamma}.$$
Then $T(p^r)$  can be decomposed as a sum over Hecke operators on individual double cosets:
\begin{equation}\label{Heckedecomp}
T(p^r) = \sum_{0 \leq a \leq b \leq r/2} T^{(r )}_{a, b}(p ).
\end{equation}
Note that for all $a \in \N_0$ we have 
\begin{equation}\label{cancel}
T^{(2a)}_{a, a}(p )  = {\rm id}, \quad \text{hence} \quad T^{(r )}_{a, b}(p ) =  T^{( r - 2a)}_{0, b-a}(p ) T^{(2a)}_{a, a}(p ) = T^{( r - 2a)}_{0, b-a}(p ).
\end{equation}

For a function $F : \Gamma \backslash \mc H \rightarrow \Bbb{C}$ that is an eigenfunction of the Hecke algebra $\mathscr H$, we denote by $\lambda(m, F)$ and  $\lambda^{(r )}_{a, b}(p, F)$  the Hecke eigenvalue of $F$ with respect to $T( m)$ and  $T^{(r )}_{a, b}(p)$, respectively. In order to construct an efficient amplifier for the pre-trace formula we need to understand the Hecke relations. 
The formal generating series for Hecke operators is given by (combine \cite[Theorem 2]{Sh} with \eqref{cancel} and \eqref{square} below)
$$\sum_{r=0}^{\infty} T(p^r) X^r = \frac{1 - p^2 X^2}{1 - T(p ) X +  (T(p )^2 - T(p^2) - p^2  )X^2   - p^3  T(p ) X^3 + p^6  X^4 }.$$
Comparing coefficients for $r=4$, we conclude   that
$$T(p^4) = (p^2+ 2p^3) T(p )^2 - T(p )^4 + p^2 T(p^2) + T(p^2)T(p )^2 + T(p^2)^2 - p^6$$
so that $\lambda(p, F)$, $\lambda(p^2, F)$ and $\lambda(p^4, F)$ cannot be simultaneously small; quantitatively: 
\begin{equation}\label{ampli}
|\lambda(p, F )| + \frac{1}{p^{3/2}}|\lambda(p^2, F)|  + \frac{1}{p^{9/2}}|\lambda(p^4, F)| \gg p^{3/2}. 
\end{equation}
(It has been proved in \cite[Lemma 5.2]{SV} that an inequality of this type exists in great generality.)

In order to analyze the amplifier in Section \ref{secpre} below, we need an explicit decomposition of $T(p^r)^2$ for $r = 1, 2, 4$ into double cosets.  The Hecke relation 
\begin{equation}\label{square}
T(p )^2 = T^{(2)}_{0, 0}(p ) + (p+1)T^{(2)}_{0, 1}(p ) + (p^3 + p^2 + p + 1)T^{(2)}_{1, 1}(p )
\end{equation}
(see e.g.\@ \cite[p.\@ 219]{123}) is well-known. Unfortunately very little explicit is in the literature for higher powers, and the computations become indeed very involved. The only reference -- which received rather negative reviews in mathscinet and Zentralblatt due to a somewhat sub-optimal presentation, but nevertheless the involved computations based on \eqref{product} are very  useful -- we are aware of is \cite[p.\@ 120]{Ko}.  We quote the following for $r \geq 2$ (add the three columns of the table in the middle of the page and combine with \eqref{cancel}):
\begin{equation}\label{kodama}
\begin{split}
T(p^r) T( p^2) &=  T^{(r+2)}_{0, 0}( p ) + (p+1) T^{(r+2)}_{0, 1}( p ) + (p^2+p+1) \sum_{b=2}^{(r+2)/2}  T^{(r+2)}_{0, b}( p ) \\
& +  (p^3+p^2+p+1) T^{(r+2)}_{1, 1}( p ) + (p^4 + 2p^3 + p^2 + p + 1)  \sum_{b=1}^{r/2}  T^{(r )}_{0, b}( p ) T^{(2)}_{1, 1}(p ) \\
&+ (p^6 + p^5 + 2p^4 + 2p^3 + p^2 + p + 1) \sum_{a=1}^{r/2} \sum_{b = 0 }^{(r-2a)/2} T^{(r-2a)}_{0, b}( p ) T^{(2a+2)}_{a+1, a+1}(p ).   
\end{split}
\end{equation}
With $r=2$ we obtain an exact formula for $T(p^2)^2$. To decompose $T(p^4)^2$ into a linear combination of Hecke operators $T^{(8)}_{a, b}( p)$ with $0 \leq a \leq b \leq 4$, we content ourselves with fairly crude upper bounds for the coefficients. 

To this end we introduce the following notation: if $$T_1 =  \sum_{0 \leq a \leq b \leq r/2} \gamma_1(a, b) T^{( r)}_{a, b}( p), \quad T_2 =  \sum_{0 \leq a \leq b \leq r/2} \gamma_2(a, b) T^{( r)}_{a, b}( p)$$
are two Hecke operators, we write $T_1 \leq T_2$ if $\gamma_1(a, b) \leq \gamma_2(a, b)$ for all $a, b$ (note that the coefficients $\gamma_j(a, b)$ in such decompositions are unique). With this notation we obtain from  \eqref{Heckedecomp} with $2r+2$ in place of $r$ and \eqref{kodama} with $2r$ in place of $r$  that 
\begin{equation}\label{kodamanew}
\begin{split}
T(&p^{2r+2})  \leq T(p^{2r}) T(p^2)\\
&  \leq 3p^2 \sum_{b \leq r+1} T_{0, b}^{(2r+2)}(p ) + 6p^4 \sum_{b \leq r} T_{0, b}^{(2r)}(p ) T_{1, 1}^{(2)}( p) + 9p^6 \sum_{1 \leq a \leq r} \sum_{b \leq r-a} T_{0, b}^{(2r-2a)}(p ) T_{a+1, a+1}^{(2a+2)}(p )\\
&\leq 10 \sum_{ s \leq r+1} p^{2r + 4 - 2s} \sum_{b \leq s}  T_{0, b}^{(2s)}(p ) T^{(2r+2-2s)}_{r+1-s, r+1-s}(p ). 
\end{split}
\end{equation}
This remains trivially true for $r=0$. With $r=1$, we conclude $T(p^4) \leq T(p^2)^2$, and hence $T(p^4)^2 \leq T(p^4)T(p^2) T(p^2).$ Applying the upper bound in \eqref{kodamanew} twice (with $r=2$ and $r = s$) in connection with   \eqref{Heckedecomp}, we obtain
\begin{equation}\label{square4}
\begin{split}
T(p^4)^2& \leq T(p^4)T(p^2) T(p^2) \leq 10 \sum_{ s \leq 3} p^{8-2s} \sum_{b \leq s} T^{(2s)}_{0, b} (p )T^{(6-2s)}_{3 - s, 3-s}(p ) T(p^2)\\
& \leq 10 \sum_{ s \leq 3  }  p^{8-2s}  T(p^{2s} )T^{(6-2s)}_{3-s, 3-s}(p ) T(p^2)\\
& \leq 100 \sum_{ s \leq 3 }  \sum_{ \tau \leq s+1 }  p^{12-2\tau}   \sum_{b \leq \tau} T^{(2\tau)}_{0, b}(p )  T^{(8-2\tau)}_{4-\tau, 4-\tau}(p )  \\
 & \leq 400  \sum_{0 \leq \tau \leq 4 } p^{12-2\tau}   \sum_{b \leq \tau} T^{(2\tau)}_{0, b}(p ) T^{(8-2\tau)}_{4-\tau, 4-\tau}(p ) . 
\end{split}
\end{equation}
We rephrase \eqref{square}, the upper bound in \eqref{kodamanew} for $r=2$ and \eqref{square4} in terms of Hecke eigenvalues: for a Hecke eigenform $F $ and for $r \in \{1, 2, 4\}$ we have
\begin{equation}\label{squarefinal}
\lambda(p^r, F)^2 =   \sum_{0  \leq b \leq s \leq r}  c_{r, b, s}(p ) \lambda^{(2s )}_{0,  b}(p, F ), \quad c_{r, b, s}( p) \ll p^{3r - 2s}.
\end{equation}
The existence of such a decomposition is obvious (recall \eqref{cancel}), the important point is the bound on the coefficients which is best-possible for $b = s$ (for smaller $b$ better bounds are available, but we shall later need a uniform bound). 

It is tempting to perform all of these computations in the algebra of Weyl group invariant polynomials using the Satake isomorphism. Unfortunately, the computations are by no means easier, since it is very hard to compute explicitly the image of a given double coset. As it may be useful in other situations, we have collected some explicit formulae in the appendix. 

\section{Joint eigenfunctions, representations and spectral parameters}\label{sec:SMF}

A smooth function $F$ on $\Gamma\backslash\mc H$ that is a joint eigenfunction of the algebra $\mc D(\mc H)$ of $G$-invariant differential operators on $\mc H$ and of moderate growth is commonly called a (Siegel) Maa{\ss} wave form for $\Gamma$. It is not necessarily square integrable, e.g.\@ the (non-holomorphic) Eisenstein series constitute examples of not square integrable Maa{\ss} wave forms. However, if $F$ additionally satisfies the regularity property
\[
 \int_{(\Gamma\cap N_j)\backslash N_j} F(n.Z) d_jn = 0
\]
for all $Z\in \mc H$ and the unipotent radicals $N_j$ (with Haar measure $d_jn$) of the proper parabolic subgroups $P_j$ of $G$, then $F$ is called cuspidal or a Siegel Maa{\ss} cusp form and it is in particular  square integrable. \\

In order to apply the trace formula, we need a spectral decomposition of $L^2(\Gamma\backslash \mc H)$ respecting the Hecke algebra. This is best done adelically, and we refer to \cite{AS} for a corresponding dictionary. Let $\Bbb{A}$ be the adele ring of $\Q$. It follows from  Langlands' monumental theory of Eisenstein series (see in particular \cite{Langlands_funceq}), nicely summarized in \cite{Ar}, that the space $L^2({\rm GSp_4}(\Q)\backslash {\rm GSp_4}(\Bbb{A}))$ has a ${\rm GSp}(\Bbb{A})$-equivariant decomposition into a direct sum, parametrized by (classes of) parabolic subgroups and irreducible cuspidal automorphic representations of their Levi subgroups, of direct integrals. Each irreducible representation occurring in this decomposition is factorizable into local components, and for almost all places $v$, it contains a unique (up to scalars) $K_v$-fixed vector \cite{Fl}. In our situation, all representations are  unramified at all finite places and hence generated by a Hecke eigenform. 

We re-state in classical language that there exists a spectral decomposition 
\begin{equation*}\label{spec}
L^2(\Gamma\backslash \mc H) = L_{\pp}^2(\Gamma\backslash \mc H) \oplus  
L_{\ac}^2(\Gamma\backslash \mc H)
\end{equation*}
where $L_{\pp}^2(\Gamma\backslash\mc H)$ and $L_{\ac}^2(\Gamma\backslash\mc H)$ are the subspaces corresponding to the pure point spectrum and the absolutely continuous spectrum, respectively.  We write this decomposition   as
\begin{equation}\label{spec1}
L^2(\Gamma\backslash \mc H) = \int V_{\varpi} d\varpi
\end{equation}
where each $V_{\varpi}$ is a one-dimensional space generated by a (not necessarily square integrable) joint eigenfunction, i.e.\ an eigenfunction of the Hecke algebra $\mathscr H$ and of the algebra $\mc D(\mc H)$, 
in the sense that each function in the $L^2$-space on the left hand side decomposes into a convergent sum
and integral of functions from each subspace $V_{\varpi}$, and a corresponding Plancherel formula holds.

The space $L_{\pp}^2(\Gamma\backslash\mc H)$ is spanned by the square integrable Siegel Maa{\ss} wave forms for $\Gamma$ and decomposes further into 
\[
 L_{\pp}^2(\Gamma\backslash\mc H) = L_{\cuspidal}^2(\Gamma\backslash\mc H) \oplus L_{\resid}^2(\Gamma\backslash\mc H),
\]
where $L_\cuspidal^2(\Gamma\backslash\mc H)$ is spanned by the Siegel Maa{\ss} cusp forms, and $L_\resid^2(\Gamma\backslash\mc H)$ is spanned by square integrable iterated residues of the Eisenstein series (see  \cite{Kim} for the determination of the residual spectrum). \\

We emphasize that the Hecke eigenvalues $\lambda(p, F)$ and $\lambda(p^2, F)$, and hence all $\lambda_{a, b}^{( r)}(p, F)$, of a joint eigenfunction $F \in V_{\varpi}$ are \emph{real}. For $L^2$-functions this follows trivially from the self-adjointness of the Hecke operators with respect to the inner product \eqref{inner}. In general this is a consequence of the fact that the corresponding local representation $\varpi_p$ is unitary, unramified and with trivial central character. If $\alpha_p,  \beta_p$ denote the local Satake parameters, then $\lambda(p, F) = p^{3/2}(x+y)$ and  $\lambda(p^2, F) = p^3(x^2 + xy + y^2)$ with $x = \alpha_p + \alpha_p^{-1}$, $y = \beta_p + \beta_p^{-1}$, and an inspection of \cite[Table A.2]{RSch} or \cite[Proposition 3.1]{PS} shows that these numbers are real in all cases.\\

The joint eigenfunctions $F \in V_{\varpi}$ arise as vectors in an induced representation from the  parabolic subgroup $NA$ of the extension of a character  
\[
\chi \colon A \rightarrow \C^{\times}, \quad \diag(e^{t_1}, e^{t_2}, e^{-t_1}, e^{-t_2}) \mapsto e^{it_1\mu_1 + it_2\mu_2}.
\]
In this way we can identify the collection of functions in the various $V_{\varpi}$ with a  subset of $\mf a^*_{\C}/W$, where we associate to each joint eigenfunction $F$ the linear form (the spectral parameter)  $\mu =  \mu_1 e_1 +  \mu_2e_2 \in    \mf a^*_{\C} /W$ (or simply $\mf a^*_{\C}$) that contains the (archimedean) Langlands parameters. By \eqref{laplace}, the Laplace eigenvalue of $F$ is then given by 
\begin{equation}\label{laplaceeig}
\lambda_{F}   = \frac{5 + \mu_1^2 + \mu_2^2}{12}. 
\end{equation}

A Weyl law of the form 
\[
\dim \spann\{F\in L^2_\cuspidal(\Gamma\backslash\mc H) \mid \lambda_{ F}  \leq T\} \sim \text{const.}\cdot T^3
\]
is known \cite{LV}, rendering the statement of Theorem~\ref{mainthm} to be non-void. We remark that in our normalization (see also \eqref{defLambda} below) the tempered spectrum has real parameters $\mu \in \mf a^*$. By \cite{Nzoukoudi}, the spectral parameters (resp.\@ representatives of their Weyl group orbits) of all irreducible unitary representations occurring in \eqref{spec1} are contained in 
\begin{equation}\label{defLambda}
 \Lambda\sceq \mf a^* \cup \{ \mu=(\mu_1,\mu_2)\in \mf a^*_\C : \text{$\|\Ima \mu\|\leq \spectrbound$ and ($\mu_1 = -\overline{\mu_2}$  or $\mu_1\in \R \cup i\R$, $\mu_2\in   i\Bbb{R}$)}\}.     
\end{equation}
(Unlike in the case of the group ${\rm GL}_2$, the set of exceptional parameters $\mu$ for a given lattice is not known to be finite.)

\section{Pre-trace formula and amplification}\label{secpre}

The first step of Selberg's celebrated trace formula \cite{Se} is the spectral expansion of an automorphic kernel, resulting in the following pre-trace formula. We recall the spectral decomposition \eqref{spec1} and denote by $F_{\varpi}$ the normalized generator of $V_{\varpi}$ and   by   $\mu_{\varpi} \in \Lambda \subseteq \mathfrak{a}^{\ast}_{\Bbb{C}}$ its spectral parameter. For any test function $f \in C_c^{\infty}(G\dq K)$ we have 
\begin{equation}\label{pretrace}
\int  \wt{f}(\mu_{\varpi})F_{\varpi}(x)\overline{F_{\varpi}(y)} \,  d\varpi  = \sum_{\gamma \in \Gamma}f(x^{-1}\gamma y) \quad\text{for $x,y\in G$},
\end{equation}
where $\wt f$ is the spherical transform defined in \eqref{sphericaltransform}. The functions $F_{\varpi}$ are understood here as right-$K$-invariant functions on $\Gamma\backslash G$.  Let $F_{0} \in L^2(\Gamma \backslash \mathcal{H})$ be the $L^2$-normalized joint eigenfunction with spectral parameter $\mu_{0} \in \Lambda$ whose supremum norm we want to bound.  

\subsection{Choice of  test function} In order to prove Theorem \ref{mainthm}, we choose a specific test function $f \in C_c^\infty(G\dq K)$ to be used in \eqref{pretrace} adapted to the spectral parameter $\mu_0$. The choice of this test function is done indirectly by choosing its spherical transform $\mc H_W(\mf a^*_\C)$ with sufficiently small support about the identity $o\in G\dq K$ and applying an inverse transformation. We prove upper bounds on $f$ using Proposition~\ref{paley} in connection with Theorem \ref{sphericalbound}. Recall that without loss of generality  we may assume $\| \mu_{0} \|$ to be sufficiently large. The test function and its spherical transform will only depend on the real part 
\begin{equation}\label{defmu}
 \mu \sceq \Rea \mu_{0} \  \in \mf a^*
\end{equation}
of the spectral parameter of $F_{0}$.

Let $\psi \in \mc H^1_W(\mf a^*_\C)$ be a fixed Paley--Wiener function  such that 
\begin{enumerate}[(a)]
\item $\psi$ is nonnegative on $\mf a^*$, so in particular $\psi(\overline{\lambda}) = \overline{\psi(\lambda)}$, 
\item $\Rea \psi \geq 1$ in a ball of radius $\spectrbound$ about $0 \in \mf a^*_\C$.
\end{enumerate}
(By choosing $\psi\in \mc H_W^{C_0/2}(\mf a^*_\C)$ where $C_0$ is the global constant appearing in the proof of Theorem~\ref{sphericalbound}, we could have made the argument independent of \cite{DKV}.) For  $\lambda\in\mf a^*_\C$ we define
\begin{equation}\label{defftilde}
 \wt f_\mu(\lambda) \sceq \Big( \sum_{w\in W} \psi(\mu-w.\lambda) \Big)^2.
\end{equation}
Clearly, $\wt f_\mu \in \mc H_W^{2}(\mf a^*_\C)$. Further, it is easy to see that  
\begin{equation}\label{pos}
  \wt{f}_{\mu}(\lambda) \geq 0 \text{ for all } \lambda \in \Lambda 
\end{equation} 
 and 
\begin{equation}\label{big}
\wt{f}_{\mu}(\lambda) \geq 1 \text{ for all } \lambda \in \Lambda \text{ whenever } \Rea \lambda = \mu. 
\end{equation}
The latter is obvious for $\lambda \in \mf a^*$. We recall the classification of possible exceptional parameters in \eqref{defLambda}. For an exceptional spectral parameter of the form $\lambda = (x+ iy) e_1 + (-x+iy) e_2$  and  $\mu = xe_1-xe_2$ with $x \in \Bbb{R}$ sufficiently large and $y \in [-\sqrt{5/2}, \sqrt{5/2}]$  we have 
\begin{align}
\nonumber \wt f_\mu(\lambda) &= 4 \Big( \Rea \psi\big( i(y,y) \big) +  \Rea \psi\big( (2x,0) + i(-y,y) \big) 
\\
\nonumber & \qquad 
+ \Rea \psi\big( (0,-2x) + i(-y,y) \big) + \Rea \psi\big( (2x,-2x) + i(y,y) \big)\Big)^2
\\
\nonumber & \geq 3 \left| \Rea \psi\big( i(y,y) \big)\right|^2 \geq 1 
\end{align}
 by the rapid decay of $\psi$ for large $x$ and   $ \psi(\overline\lambda) = \overline{\psi(\lambda)}$. For an exceptional spectral parameter of the form $\lambda = iye_1 + x e_2$ and $\mu=xe_2$   we have by the same argument
\begin{align}
\nonumber \wt f_\mu(\lambda) &= 4 \Big( \Rea \psi\big( i(0,y) \big) + \Rea \psi\big( (x,x)+ i(y,0)\big) 
\\
\nonumber & \qquad + \Rea\psi\big( (2x,0) + i(0,y)\big) + \Rea \psi\big( (x,-x) + i(y,0)\big)\Big)^2
\\
\nonumber & \geq 3 \left| \Rea \psi\big( i(0,y)\big)\right|^2\geq 1.
\end{align}
Exceptional spectral parameters with $\lambda_1, \lambda_2$ both purely imaginary are bounded and hence do not meet the condition $\Rea \lambda = \mu$. 

Finally, $\wt f_\mu\in\mc H_W(\mf a^*_\C)$ immediately implies
\begin{equation}\label{decay}
   \wt{f}_{\mu}(\lambda) \ll_A \max_{w \in W}(1+ \|\mu - w.\lambda\|)^{-A}
\end{equation}
for $\lambda\in\mf a^*$.

By Proposition~\ref{paley} the inverse spherical transform   $  f_\mu$  of $ \wt f_\mu$ has support in $\overline{B^A_{2}(o)}$, independent of $\mu$. Combining Theorem~\ref{sphericalbound}, Proposition~\ref{paley}, \eqref{cexplicit} and \eqref{decay} and recalling the notation \eqref{defC} for the Cartan projection, we conclude
\begin{equation}\label{mainbound}
  f_{\mu}(g) \ll \| \mu \|^{4} (1 + \| \mu \| \cdot \|C(g) \|)^{-1/2}.  
\end{equation}

\subsection{Construction of the amplifier} With the choice \eqref{defftilde} we return to \eqref{pretrace} and construct a suitable amplifier. Given a double coset $\Gamma M \Gamma = \bigcup_i \Gamma M_i$ with $M, M_i \in \GSp^+_4(\Z)$, we apply \eqref{pretrace} for the elements $(x, y) = (g, (\det M_i)^{-1/4} M_i g) \in G\times G$  obtaining
\[
 \int \lambda(M, F_{\varpi}) \wt{f}_{\mu}(\mu_{\varpi})|F_{\varpi}(g)|^2 \, d \varpi    = \sum_{\gamma \in \Gamma M \Gamma}f_{\mu}(g^{-1} \wt{\gamma} g)
\]
where $\wt{\gamma} \sceq (\det \gamma)^{-1/4}\gamma$ and 
 $\lambda(M, F_{\varpi})$ is the eigenvalue of $F_{\varpi}$ with respect to $T_{\Gamma M \Gamma}$. 

Let $L \geq 5$ be a parameter and let $\mathcal{P}$ be the set of primes in $(L, 2L]$. Define $x(n) \sceq \text{sgn}(\lambda(n, F_{0}))$ for $n\in\N$ and let 
\begin{equation*} 
A_{\varpi} \sceq \Bigl(\sum_{l \in \mathcal{P}}x(l) \lambda(l, F_{\varpi})\Bigr)^2 + \Bigl(\sum_{l \in \mathcal{P}} x(l^2) l^{-3/2} \lambda(l^2, F_{\varpi})\Bigr)^2 + \Bigl(\sum_{l \in \mathcal{P}} x(l^4) l^{-9/2} \lambda(l^4, F_{\varpi})\Bigr)^2 \geq 0.
\end{equation*}
By \eqref{ampli} and the prime number theorem we have
\begin{equation}\label{ampli2}
\begin{split}
A_{0}& = \Bigl(\sum_{l \in \mathcal{P}}  | \lambda(l, F_{0})|\Bigr)^2 + \Bigl(\sum_{l \in \mathcal{P}}  l^{-3/2} |\lambda(l^2, F_{0})|\Bigr)^2 + \Bigl(\sum_{l \in \mathcal{P}}   l^{-9/2} |\lambda(l^4, F_{0})|\Bigr)^2 \\
&\geq \frac{1}{3}  \Bigl(\sum_{l \in \mathcal{P}}  | \lambda(l, F_{0})|+  l^{-3/2} |\lambda(l^2, F_{0})| +   l^{-9/2} |\lambda(l^4, F_{0})|\Bigr)^2  \gg L^5 (\log L)^{-2}. 
\end{split}
\end{equation}
Combining \eqref{coprime} and \eqref{squarefinal}, we can expand $A_\varpi$:
\begin{equation}\label{ampli3}
\begin{split}
A_{\varpi} & = \sum_{l_1 \not= l_2} \left(x(l_1l_2) \lambda(l_1l_2, F_{\varpi})  + \frac{x(l_1^2l_2^2)}{(l_1l_2)^{3/2}} \lambda(l_1^2l_2^2, F_{\varpi}) + \frac{x(l_1^4l_2^4)}{(l_1l_2)^{9/2}} \lambda(l_1^4l_2^4, F_{\varpi})\right)\\
& + \sum_{0 \leq  b \leq s \leq 4} \sum_{l} \xi_{b, s}(l) \lambda_{0, b}^{( 2s)}(l, F_{\varpi} )  \end{split}
\end{equation}
where $\xi_{b, s}(l) \ll l^{3 - 2s}$. By \eqref{pos}, \eqref{big}, \eqref{ampli2} and \eqref{ampli3} we conclude that 
\begin{equation}\label{pretrace1}
\begin{split}
 \frac{L^{5 }}{(\log L)^2}  |F_{0}(g)|^2 &\ll  \int A_{\varpi}  \wt{f}_{\mu}(\mu_{\varpi}) |F_{\varpi}(g)|^2 \, d\varpi   \\
 & \ll \sum_{r \in \{1, 2, 4\}} \sum_{l_1 \not= l_2 \in \mathcal{P}(L)} (l_1l_2)^{\frac{3}{2}(1 - r)} \sum_{\gamma \in \hog({l_1^rl_2^r})}   |f_{\mu}(g^{-1}\wt{\gamma} g)|  \\
 & \quad\quad\quad +\sum_{0 \leq r \leq 4} \sum_{l \in \mathcal{P}(L)}  l^{3 - 2r}  \sum_{\gamma \in \hog(l^{2r})}  |f_{\mu}(g^{-1}\wt{\gamma} g)|  
  \end{split}
\end{equation}
with $S(m)$ as in \eqref{sm}. 

\subsection{Proof of Theorem \ref{mainthm}} In order to prove Theorem \ref{mainthm},  we  estimate the right hand side of \eqref{pretrace1}. To this end, we fix some small $0 < \delta_0 < 1$. We estimate the number of matrices $\gamma$ with $\| C(g^{-1}\wt \gamma  g  )\| \geq \delta_0$ trivially and obtain a saving from the decay of $f_{\mu}$ as specified in \eqref{mainbound}. On the other hand, we will prove a strong bound for the number of matrices $\gamma$ with $\| C(g^{-1}\wt \gamma  g )\| \leq \delta_0$, and for those we estimate $f_{\mu}$ trivially.  With this in mind, we define
\begin{equation}\label{defS}
\mathscr{S}(g)_{\delta}[m] \sceq \left\{\gamma \in\hog(m) \ \left\vert\  \|\wt\gamma   - gKg^{-1} \| \leq  \delta \vphantom{\hog(m)}  \right.\right\}
\end{equation}
for $m \in \N$ and  $\delta>0$. Since $g$ varies in a fixed compact set $\cptset$, we make the observation that $\| C(g^{-1}\wt \gamma  g) \| \leq \delta$ for some $\delta \ll 1$ and  $\det \gamma = m^2$ imply $\wt {\gamma} \in g K g^{-1} + O_{\Omega}(\delta)$ and hence $\gamma \in \mathscr{S}(g)_{c\delta}[m]$ for some constant $c>0$ depending on $\cptset$.  

The next section is devoted to a bound for the cardinality of $\mathscr{S}(g)_{\delta}[m]$ uniformly in $\delta$ and $m$. Our principal result in this direction is 
\begin{prop}\label{prop1} There exist $\eta, B>0$ such that for all $\delta \ll 1$, $\eps>0$, $m\in\N$ and $g\in G$ we have
\[
 \#\mathscr{S}(g)_\delta[m]\ll_{g,\eps} m^{1+\varepsilon} \left(1 + \delta^{\eta}m^B\right).
\]
The implied constant does not depend on $m$ and $\delta$.
\end{prop}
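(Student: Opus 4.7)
The strategy is to translate the closeness condition $\|\wt\gamma - gKg^{-1}\|\leq\delta$ into an explicit system of quadratic Diophantine inequalities and then count integer solutions by quadratic-form techniques, exploiting the thickness $\delta$ for a power-saving error term. Using \eqref{compact}, write a general element of $K$ as $k=\textmat{B}{C}{-C}{B}$ with $u:=B+iC\in U(2)$, and view the pair $(B,C)$ as a vector in $\R^8$. The condition $u^*u=I$ cuts $U(2)$ out of $\R^8$ by \emph{four} quadrics: $B^\top B + C^\top C = I$ (three independent equations, since the left side is symmetric) and $B^\top C - C^\top B = 0$ (one equation, since the left side is antisymmetric). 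Writing $g^{-1}\gamma g = \textmat{\wt A_1}{\wt A_2}{\wt A_3}{\wt A_4}$, the condition $\wt\gamma\in gKg^{-1}+O(\delta)$ becomes, up to the linear constraints $\wt A_4\approx\wt A_1$ and $\wt A_3\approx -\wt A_2$, the requirement that $(\wt A_1,\wt A_2)$ lies within $O(\sqrt m\,\delta)$ of the four-dimensional variety $\sqrt m\cdot U(2)\subset\R^8$. After the invertible linear change of variables $\gamma\mapsto g^{-1}\gamma g$, the task reduces to counting integer (or lattice) points in the $O(\sqrt m\,\delta)$-neighbourhood of the intersection of the four quadrics above in $\R^8$, exactly as indicated in the introduction.

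Next I would exploit the near-product decomposition $U(2)\cong (SU(2)\times U(1))/\{\pm 1\}$, which identifies $SU(2)$ with the unit quaternions and splits the problem into a ``quaternary piece'' (integer vectors near a three-sphere $\sqrt m\cdot S^3$) and a ``binary piece'' (integer vectors near a circle $\sqrt m\cdot S^1$), each twisted by a positive definite quadratic form whose coefficients depend on $g$ but are uniformly bounded for $g\in\cptset$. Integer points exactly on these quadrics are counted classically: the binary sphere contributes $\ll m^{\varepsilon}$ points (via $r_2(m)$) and the quaternary sphere contributes $\ll m^{1+\varepsilon}$ points (via $r_4(m)$), matching the desired main term $m^{1+\varepsilon}$ at $\delta=0$.

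For the genuine $\delta$-tube I need a quantitative lattice-point count in a thin shell around each quadric. The binary shell yields $\ll m^\varepsilon(1 + \delta^{\eta_1}m^{B_1})$ by an elementary geometry-of-numbers argument for integer points near a curve of degree two. The quaternary shell requires $\ll m^{1+\varepsilon}(1 + \delta^{\eta_2}m^{B_2})$ integer vectors within distance $\sqrt m\,\delta$ of a $g$-twisted $3$-sphere of radius $\sqrt m$, uniformly in $g\in\cptset$. This is the crux, and it can be carried out either by a Kloosterman-refined circle method for quaternary forms or by the lattice-slicing techniques of the earlier sup-norm papers \cite{BHM,BM}, adapted so that the implied constants depend only on the bounded data of $g\in\cptset$. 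Multiplying the two estimates yields the proposition with $\eta=\min(\eta_1,\eta_2)$ and $B=B_1+B_2$.

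The principal obstacle is full uniformity in the $g$-twist: after conjugation, the six symplectic-similitude equations $\gamma^\top J\gamma = mJ$ and the four unitarity quadrics assemble into positive definite integer quadratic forms whose coefficients depend non-rationally on $g$. Extracting a polynomial-in-$m$ bound with a genuine power saving $\delta^\eta$, while absorbing all $g$-dependence into the implied constant $\ll_{g,\varepsilon}$, demands the careful parametrization and quadratic-form bookkeeping alluded to in the introduction; the binary/quaternary split must also be performed in a way compatible with the non-integral twist, which is what makes the bookkeeping technical rather than conceptually difficult.
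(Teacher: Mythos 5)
Your high-level reduction — translating $\wt\gamma \in gKg^{-1} + O(\delta)$ into a system of four quadratic Diophantine inequalities in $8$ essential integer variables, and recognizing that the count should scale like $m^{1+\eps}$ with a power-saving in $\delta$ — matches the paper's strategy, and your remark that the $U(2)$-structure morally produces ``binary and quaternary'' forms echoes the paper's own introductory comment before Section~\ref{sec7}. But the proposal has two genuine gaps that leave the actual counting argument unproved.

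First, the split via $U(2)\cong(SU(2)\times U(1))/\{\pm1\}$ does not give a coordinate-level factorization. Writing $u=e^{i\theta}v$ with $v\in SU(2)$ is a \emph{multiplicative} decomposition, a $2$-to-$1$ quotient map, and it does not split the ambient $\R^8 \cong M_2(\C)$ (nor the $g$-twisted integer lattice) into a direct sum of a $4$-dimensional quaternary piece and a $2$-dimensional binary piece; note that $S^3\times S^1$ naturally lives in $\R^6$, not $\R^8$. Hence the integer-point count near $\sqrt m\, gKg^{-1}$ is not the product of a count near a $3$-sphere and a count near a circle, and multiplying the two sub-estimates to get $\eta=\min(\eta_1,\eta_2)$, $B=B_1+B_2$ has no justification. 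The paper avoids this by parametrizing $\gamma\in\mathscr S(g)_\delta[m]$ through its first two columns $r,s\in\Z^4$ (Proposition~\ref{propstab1}), observing that these determine the rest up to $O_g(\delta m^{1/2})$, and then using the linear relations \eqref{matalt}--\eqref{mat5a} coming from $\gamma^\top J\gamma = mJ$ to express $s_1,s_3$ (or $s_2,s_4$) in terms of the remaining variables. This turns the problem into a chain: choose $r_1,r_2$ freely ($\asymp m$ choices), then $(r_3,r_4)$ lie near a real positive definite binary quadratic curve, then $(s_2,s_4)$ lie near another such curve — two genuinely binary counting problems, not a binary-times-quaternary product.

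Second, and more critically, you invoke ``a Kloosterman-refined circle method'' or ``lattice-slicing techniques'' for the $\delta$-shell count with $g$-dependent (irrational) coefficients without explaining how to make that work uniformly in $m$ and $\delta$. This is exactly where the paper's real work lies: Lemma~\ref{lem1} (simultaneous Dirichlet approximation) replaces the real coefficients by rationals with a common denominator $q\leq T$, scaling $P$ to an \emph{integer} binary quadratic polynomial $\wt P$ with $|\wt P|\leq R = c_D(\delta T + Z^{2C}T^{-1/6})$; then Lemma~\ref{lem2}(a) bounds solutions of $\wt P=r$ for each integer $r\leq R$ via norm forms and the divisor bound, and optimizing $T$ yields Corollary~\ref{cor3} with the explicit saving $\delta^{1/7}$. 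Without some surrogate for this rationalization step, no amount of bookkeeping with positive definite forms will deliver the $\delta^\eta$ gain your Proposition claims, because the divisor-function input is unavailable for forms with irrational coefficients. So the proposal correctly identifies the shape of the problem and even acknowledges the $g$-twist as the obstacle, but it lacks the two ideas — the linear elimination via symplecticity reducing to \emph{two} binary problems, and the Diophantine approximation that rationalizes the forms — that the paper uses to close the argument.
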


Taking this for granted (a proof follows in Section~\ref{sec7} below), it is now a simple matter to prove Theorem \ref{mainthm}. 

\begin{proof}[Proof of Theorem~\ref{mainthm}]
We insert \eqref{mainbound} and Proposition~\ref{prop1} into \eqref{pretrace1}. The constant $B$ used below is global and possibly larger than the one in Proposition~\ref{prop1}, the variables $\delta, L $ will be fixed below. As mentioned before, we can assume that the spectral parameter  $\mu_{0} $ (and hence its real part $\mu$) of our form $F_{0}$ is sufficiently large.  

The contribution of the matrices $\gamma$ with $\| C(g^{-1}\wt \gamma  g )\| \geq \delta$ to the right hand side of \eqref{pretrace1} is 
$$\ll_{\cptset}  \| \mu \|^{7/2} \delta^{-1/2} L^{B},$$
and the contribution of the matrices $\gamma$ with $\| C(g^{-1}\wt \gamma  g)\| \leq \delta$ to the right hand side of \eqref{pretrace1} is 
$$\ll_{\eps, \cptset} \| \mu \|^4 L^{4+\varepsilon} (1 + \delta^{\eta}L^{B}).$$
Upon choosing  
$$\delta^{1/2 + \eta}  \sceq  \| \mu \|^{-1/2} \quad\text{and}\quad  L \sceq \left\lceil\delta^{-\frac{\eta}{B}}\right\rceil$$
we obtain from \eqref{pretrace1} that
\begin{displaymath}
   |F_{0}(g)|^2 \ll_{\cptset,\eps} \| \mu \|^{4} L^{-1+\eps} (\log L)^2 \ll \|\mu\|^4 L^{-\frac34}. 
\end{displaymath}
Thus,
\begin{displaymath}
  F_{0}(g) \ll_{\eps,\cptset} \| \mu \|^{2 - \frac{3\eta}{4B(1 + 2\eta)} + \varepsilon}. 
\end{displaymath}
Theorem \ref{mainthm} now follows from \eqref{laplaceeig}. 
\end{proof}

\section{Diophantine Analysis}\label{sec7}

It remains to prove Proposition \ref{prop1} which is the purpose of this section. Let
$$k = \left(\begin{matrix} B & C\\ -C & B \end{matrix}\right) \in \sqrt{m} K.$$
This matrix is symplectic and orthogonal, i.e.\@ 
\begin{equation}\label{properties}
k^\top k = m I_4 \quad \text{and} \quad k^\top Jk = mJ.
\end{equation} 
Let 
\begin{equation*} 
Q \sceq (g g^{\top})^{-1},
\end{equation*}
which is a positive definite matrix depending only on $g\in G$. To simplify notation we set 
\[
 Q = (q_{ij}) = (q_1\ q_2\ q_3\ q_4),
\]
where $q_i=(q_{1i}, q_{2i}, q_{3i}, q_{4i})^\top$ are column vectors. Further we set 
\[
 Q_1 \sceq (q_1\ 0\ q_3\ 0) \quad\text{and}\quad Q_2\sceq (0\ q_2\ 0\ q_4),
\]
hence $Q=Q_1+Q_2$.

\begin{prop}\label{propstab1} Let $\delta \ll 1$. For any $g\in G$, the set $\mathscr{S}(g)_{\delta}[m] $ defined in \eqref{defS} consists of matrices $$ \gamma = \left(\begin{matrix} r_1 & s_1 & * & * \\   r_2 & s_2 & * & * \\ r_3& s_3 &* &* \\ r_4 & s_4 & * & * \end{matrix}\right)$$
with the following properties:
\begin{enumerate}[{\rm (a)}]
\item\label{propstab1a} All entries of the matrix are $\ll_g m^{1/2} $. 
\item\label{propstab1b} The vectors $ {r} = (r_1, r_2, r_3, r_4)^\top$ and $ {s} = (s_1, s_2, s_3, s_4)^\top$ determine each of the remaining entries of the matrix up to $O_g(\delta m^{1/2})$. 
\item\label{propstab1c} We have
\begin{equation}\label{mat1a}
 {r}^{\top} Q {r} = m q_{11} + O_g(\delta m ),  \quad  {s}^{\top} Q {s} = m q_{22} + O_g(\delta m ).
\end{equation}
\item\label{propstab1d} If  $ {r}^{\top}Q_1 {r} \asymp m$, then 
\begin{equation}\label{mat4a}
\begin{split}
s_1 & = \frac{s_2 \cdot  {r}^{\top}A_{11} {r} + s_4 \cdot  {r}^{\top}A_{12} {r}   + m r_1 q_{12}}{ {r}^{\top}Q_1 {r}} + O_g(\delta m^{1/2}),\\
   s_3 &  = \frac{ s_2 \cdot  {r}^{\top}A_{21} {r} + s_4 \cdot  {r}^{\top}A_{22} {r}   + m r_3 q_{12}}{ {r}^{\top}Q_1 {r}} + O_g(\delta m^{1/2})
   \end{split}
\end{equation}
with $A_{11} = (-q_2\ 0\ 0\ -q_3)$, $A_{12}=(-q_4\ q_3\ 0\ 0)$, $A_{21}=(0\ 0\ -q_2\ q_1)$ and $A_{22} = (0\ -q_1\ -q_4\ 0)$.
If $ {r}^{\top}Q_2 {r} \asymp m$, then
\begin{equation}\label{mat5a}
\begin{split}
s_2 & = \frac{s_1 \cdot  {r}^{\top}B_{11} {r} + s_3 \cdot  {r}^{\top}B_{12} {r}   +m r_2 q_{12}}{ {r}^{\top}Q_2 {r}} + O_g(\delta m^{1/2}),\\
s_4 &  = \frac{ s_1 \cdot  {r}^{\top}B_{21} {r} + s_3 \cdot  {r}^{\top}B_{22} {r}   + m r_4 q_{12}}{ {r}^{\top}Q_2 {r}} + O_g(\delta m^{1/2})
\end{split}
\end{equation}
with $B_{11} = (0\ -q_1\ -q_4\ 0)$, $B_{12} = (q_4\ -q_3\ 0\ 0)$, $B_{21} = (0\ 0\ q_2\ -q_1)$ and $B_{22}=(-q_2\ 0\ 0\ -q_3)$.
\end{enumerate}
\end{prop}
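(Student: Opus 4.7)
By definition of $\mathscr{S}(g)_\delta[m]$ in \eqref{defS}, we may write $\gamma = g k' g^{-1} + E$ where $k' = \sqrt{m}\, k$ for some $k\in K$ (so $k'$ satisfies both identities of \eqref{properties}) and $E$ is an error matrix with $\|E\| \ll_g \delta m^{1/2}$. Since the entries of $k'$ are $O(m^{1/2})$ and $g$ varies in a fixed compact set, part (a) is immediate.

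For (c), the starting point is the exact identity
\[
(g k' g^{-1})^\top Q (g k' g^{-1}) = g^{-\top} k'^\top k' g^{-1} = m\, g^{-\top} g^{-1} = m Q,
\]
which uses $k'^\top k' = m I_4$ from \eqref{properties} together with the defining formula $Q = (gg^\top)^{-1}$. Substituting $\gamma = g k' g^{-1} + E$ and bounding the cross terms via (a) yields $\gamma^\top Q \gamma = m Q + O_g(\delta m)$. The $(1,1)$ and $(2,2)$ entries of this matrix identity are precisely \eqref{mat1a}.

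For (d), I would combine the $(1,2)$ entry of the approximate identity of (c), namely $r^\top Q s = m q_{12} + O_g(\delta m)$, with the \emph{exact} identity $r^\top J s = 0$ coming from the $(1,2)$ entry of $\gamma^\top J \gamma = m J$ (which holds since $\gamma \in \hog(m)$). Treating $(s_1, s_3)$ as unknowns and $(s_2, s_4)$ as parameters, these constitute a $2\times 2$ linear system whose coefficient matrix, setting $v \sceq Q r$, has determinant $r_1 v_1 + r_3 v_3 = r^\top Q_1 r$ (since $Q_1$ retains only the odd-indexed columns of $Q$). Under the hypothesis $r^\top Q_1 r \asymp m$, Cramer's rule gives closed-form expressions which, after identifying the emerging bilinear-in-$r$ coefficients with the matrices $A_{ij}$, become exactly \eqref{mat4a}; the propagated error is $O_g(\delta m) / |\det| \cdot O(m^{1/2}) = O_g(\delta m^{1/2})$. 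The case $r^\top Q_2 r \asymp m$ is entirely symmetric and produces \eqref{mat5a}. Since $r^\top Q_1 r + r^\top Q_2 r = r^\top Q r = m q_{11} + O_g(\delta m) \asymp m$ (using $q_{11} > 0$ from positive definiteness of $Q$), at least one of the two alternatives is always available.

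For (b), I would iterate the strategy of (d) on the remaining columns, writing down $4\times 4$ linear systems for $\gamma_3$ and $\gamma_4$ in terms of $r, s$ arising from the $(1,3),(2,3)$ and $(1,4),(2,4)$ entries of both $\gamma^\top J \gamma = m J$ and $\gamma^\top Q \gamma = m Q + O_g(\delta m)$. Their coefficient matrices have rows $(Jr)^\top, (Js)^\top, (Qr)^\top, (Qs)^\top$. The main obstacle is here: one must verify uniform invertibility with inverse entries of size $O_g(m^{-1/2})$, so that the $O_g(\delta m)$ errors on the right-hand side propagate only to $O_g(\delta m^{1/2})$ in the solution. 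I would handle this either by a case analysis refining the split $Q = Q_1 + Q_2$ used in (d), or geometrically via the parametrization \eqref{compact}: the matrix $\gamma$ lies within $O_g(\delta m^{1/2})$ of the $4$-dimensional submanifold $\sqrt{m}\, g K g^{-1}$, and on $K$ the first two columns already determine $k$ (they recover $B$ and $-C$ from \eqref{compact}), so compactness of $K$ and of $\cptset$ supplies the required uniform bound on the inverse.
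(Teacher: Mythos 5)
Parts (a), (c), and (d) of your proposal are correct and coincide with the paper's argument; for (d) you use the exact relation $r^\top J s = 0$ (since $\gamma^\top J\gamma = mJ$ holds identically on $\hog(m)$) in place of the paper's $O_g(\delta m)$ bound, a slight sharpening that is immaterial, and your identification of the $2\times 2$ determinant with $\pm r^\top Q_1 r$ is exactly how the paper obtains \eqref{mat4a}. The place where your sketch has a real gap is part (b). Your geometric route is indeed the paper's, but the step you invoke does not by itself suffice: that the first two columns of $k\in K$ determine $k$ is true, yet conjugation by a general $g\in G$ mixes all four columns of $k$ into the left half of $gkg^{-1}$, so the injectivity of $k\mapsto (gkg^{-1})_{\text{left half}}$ is not automatic from this and compactness cannot conjure injectivity of a map that has not been shown injective. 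The argument works because one may normalize $g$ to the representative \eqref{point1}, whose inverse is block upper-triangular, so only $B$ and $C$ enter the left half; explicitly
\[
 g k g^{-1} = \begin{pmatrix} VBV^{-1} - XV^{-1}CV^{-1} & * \\ -V^{-1}CV^{-1} & * \end{pmatrix},
\]
an invertible affine function of $(B,C)$ with coefficients depending only on $g$ (uniformly for $g$ in the compact set $\cptset$), from which $C$ and then $B$ are read off. This is precisely what the paper writes, and supplying it closes the gap. Your alternative via a $4\times4$ linear system with rows $(Jr)^\top,(Js)^\top,(Qr)^\top,(Qs)^\top$ is not pursued by the paper, and you are right to suspect that controlling its determinant uniformly would require further case analysis.
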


\begin{proof} Part \eqref{propstab1a} is obvious from the definition \eqref{defS}.\\
Part \eqref{propstab1b}  follows from
$$g k g^{-1}   = g \left(\begin{matrix} B & C\\ - C & B\end{matrix}\right) g^{-1}  = \left(\begin{matrix}  VBV^{-1} - X V^{-1} C V^{-1} & * \quad \\ -V^{-1}CV^{-1} & * \quad  \end{matrix} \right)$$
where we used the notation \eqref{point1}. 
Hence for fixed $X$, $V$, the first two columns of a matrix  $\gamma \in m^{1/2}gKg^{-1} + O(\delta m^{1/2})$ determine $B$ and $C$  up to $O(\delta m^{1/2})$, and hence the remaining two columns up to $O(\delta m^{1/2})$. \\
By \eqref{properties} and part \eqref{propstab1a} we have   $\gamma^{\top} Q \gamma = mQ + O_g(\delta m)$ for $\gamma \in \mathscr{S}(g)_{\delta}[m]$   which implies part \eqref{propstab1c}.\\
The same argument shows $\gamma^{\top} J \gamma = mJ + O_g( \delta m)$ for $\gamma  \in \mathscr{S}(g)_{\delta}[m]$, hence 
\begin{equation}\label{matalt}
   {r}^{\top} J  {s} = O_g(\delta m) \quad \text{and}\quad  {r}^{\top} Q  {s} = m q_{12} + O_g( \delta m).
\end{equation}  
If  $ {r}^{\top}Q_1 {r} \asymp m$ (in particular $\not= 0$), we solve the linear system  \eqref{matalt} for $s_1$ and $s_3$, obtaining \eqref{mat4a}. If $ {r}^{\top}Q_2 {r} \asymp m$, the same argument gives \eqref{mat5a}. This completes the proof of part \eqref{propstab1d}.
\end{proof}
  
We see that for sufficiently small $\delta$, the matrices in question are essentially characterized by the 8 variables $r_j, s_j$ that satisfy the four quadratic equations \eqref{mat1a} and \eqref{matalt}. Hence geometrically we need to study the lattice points in a $\delta$-neighbourhood of the intersection of four quadrics in 8 variables. Roughly speaking, we will choose $r_1, r_2$ freely, then $r_3, r_4$ are essentially determined by the first equality in \eqref{mat1a} and the theory of binary quadratic forms. Once $ {r}$ is fixed, we substitute \eqref{mat4a} or \eqref{mat5a} into the second equation of \eqref{mat1a}, obtaining again a binary problem that essentially fixes $ {s}$. Hence  the cardinality of the integral matrices in $\mathscr{S}(g)_{\delta}[m]$ is  $O(m^{1+\varepsilon})$ for sufficiently small $\delta$.  At least for very small $\delta$ and odd positive  integers $m$, the bound of Proposition \ref{prop1} is essentially best possible, since $\mathscr{S}({\rm id})_{\delta}[m]$ contains, for 
every $\delta > 0$, all the $\asymp m$ matrices of the form 
\begin{displaymath}
 \left(\begin{matrix} a_1 & a_3 & a_2  & a_4  \\
-a_3  & a_1  & a_4  & -a_2   \\
-a_2 & -a_4   & a_1   & a_3 \\
-a_4 & a_2  &  -a_3  & a_1  
\end{matrix}\right), \quad a_1^2 + a_2^2 + a_3^2 + a_4^2   = m.
\end{displaymath}

In order to make these arguments rigorous in the following we  start with a multi-dimensional version of Dirichlet's approximation theorem \cite[Theorem 200]{HW}.
\begin{lemma}\label{lem1}
Let $\xi_1, \ldots, \xi_n$ be real numbers, $T > 1$. Then there exist integers $p_1, \ldots, p_n$ and a  positive integer $q \leq T$ such that $  |\xi_j - p_j/q| \leq (qT^{1/n})^{-1}$ for all $1 \leq j \leq n$. 
\end{lemma}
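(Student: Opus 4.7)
The statement is the classical simultaneous Dirichlet approximation theorem, and the natural plan is a one-shot pigeonhole argument; no deeper machinery is required.

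First I set $N \sceq \lceil T^{1/n}\rceil$ so that $1/N \leq T^{-1/n}$, and I partition the half-open unit cube $[0,1)^n$ into the $N^n$ axis-aligned half-open boxes of side length $1/N$. Next, for each integer $q$ with $0 \leq q \leq N^n$, I form the point
\[
P_q \sceq \bigl( \{q\xi_1\}, \ldots, \{q\xi_n\} \bigr) \in [0,1)^n,
\]
where $\{\cdot\}$ denotes the fractional part. These are $N^n + 1$ points distributed among $N^n$ boxes, so by the pigeonhole principle there exist two indices $0 \leq q_1 < q_2 \leq N^n$ for which $P_{q_1}$ and $P_{q_2}$ lie in the same box.

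Setting $q \sceq q_2 - q_1 \in \{1, \ldots, N^n\}$ and $p_j \sceq \lfloor q_2 \xi_j \rfloor - \lfloor q_1 \xi_j \rfloor$ for $1 \leq j \leq n$, I read off from the construction that $|q\xi_j - p_j| \leq 1/N \leq T^{-1/n}$, and dividing by $q$ delivers the desired inequality $|\xi_j - p_j/q| \leq (q T^{1/n})^{-1}$.

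The only point requiring a brief word of care is the upper bound on $q$: the pigeonhole step only supplies $q \leq N^n = \lceil T^{1/n}\rceil^n$ rather than $q \leq T$, and when $T^{1/n}$ is not a positive integer these differ by at most a factor of $(1 + T^{-1/n})^n = 1 + O_n(T^{-1/n})$. This discrepancy is harmless for the application in the following section and can in any case be absorbed by a standard trick: apply the argument instead with $T$ replaced by $T' \sceq \lfloor T^{1/n}\rfloor^n \leq T$, accepting a slightly weaker approximation constant that differs from $T^{-1/n}$ only by a factor depending on $n$. I do not foresee any substantive obstacle in this proof.
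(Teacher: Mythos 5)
The lemma is simultaneous Dirichlet approximation; the paper takes it off the shelf from Hardy--Wright (Theorem~200), and your pigeonhole argument is the standard route to that result, so the approaches agree in spirit.

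You correctly flag the one wrinkle: with $N = \lceil T^{1/n}\rceil$ the pigeonhole produces $q \leq N^n = \lceil T^{1/n}\rceil^n$, which may exceed $T$ when $T^{1/n}$ is not an integer. But your proposed fix does not quite close it. Running the argument with $T' = \lfloor T^{1/n}\rfloor^n \leq T$ does cap $q$ at $T$, but then the approximation becomes $|\xi_j - p_j/q| < (q\lfloor T^{1/n}\rfloor)^{-1}$, which is weaker than the stated $(qT^{1/n})^{-1}$ by the factor $T^{1/n}/\lfloor T^{1/n}\rfloor$ --- at most $2$ since $T>1$, so bounded and in fact not a factor ``depending on $n$'' as you write. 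As it stands you prove the lemma only up to a bounded multiplicative loss. This is indeed harmless for the downstream use in Corollary~\ref{cor3}, where only the order of magnitude of $T^{-1/6}$ matters, but it is not literally the stated lemma.

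If you want the exact constants, bypass the integrality issue with Minkowski's convex body theorem in its compact form (volume $\geq 2^{n+1}$ suffices), applied to the compact symmetric convex body
\[
K = \left\{(x_0, \ldots, x_n) \in \RR^{n+1} \,:\, |x_0| \leq T,\ |x_0\xi_j - x_j| \leq T^{-1/n} \ (1 \leq j \leq n)\right\},
\]
which has volume exactly $2T \cdot (2T^{-1/n})^n = 2^{n+1}$, since the linear change of variables $(x_0, x_j) \mapsto (x_0, x_0\xi_j - x_j)$ has determinant $\pm 1$. A nonzero lattice point $(q, p_1, \ldots, p_n) \in K$ must have $q \neq 0$: otherwise $|p_j| \leq T^{-1/n} < 1$ forces $p_j = 0$ for all $j$. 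Replacing the point by its negative if necessary yields $1 \leq q \leq T$ and $|q\xi_j - p_j| \leq T^{-1/n}$, which is the lemma verbatim.
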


For a polynomial $P$ we denote by $H(P)$ the largest coefficient in absolute value.

The next lemma is standard.
\begin{lemma}\label{lem2}
\begin{enumerate}[{\rm (a)}]
\item\label{lem2a} Let $P(x, y) \in \Z[x, y]$ be a quadratic polynomial whose quadratic homogeneous part is positive definite.   Then the number of integral solutions to $P(x, y) = 0$ is $O_\eps( H(P)^{\varepsilon})$ for all $\varepsilon> 0$.\\
\item\label{lem2b} There exists a constant $C>0$ such that for each $\delta,D>0$ and each quadratic polynomial $P(x, y) \in \R[x, y]$ whose quadratic homogeneous part is positive definite with discriminant $|\Delta| \geq D$, the bound $|P(x, y)| \leq \delta$ implies $\max(|x|, |y|) \ll_D (\delta + 1 + H(P))^C$. 
\end{enumerate}
\end{lemma}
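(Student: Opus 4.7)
For part~(\ref{lem2a}), the plan is to reduce the equation $P(x,y)=0$ to a representation problem for a positive definite integral binary quadratic form and then invoke the classical divisor bound. Writing $P(x,y) = ax^2 + bxy + cy^2 + dx + ey + f$ with $4ac-b^2 > 0$, I would complete the square twice, first multiplying by $4a$ to eliminate the $x$-linear term and then by the discriminant to eliminate the $y$-linear term, producing an identity of the form
\[
 4a(4ac-b^2) P(x,y) = (2ax + by + d)^2 + (4ac-b^2)(\text{integer linear in } y)^2 - N
\]
where $N$ is an integer with $|N| \ll H(P)^{O(1)}$. Hence the integer pairs $(x,y)$ with $P(x,y)=0$ are in bijection (up to a bounded multiplicity from the linear change of variables) with representations of $N$ by the positive definite binary form $U^2 + (4ac-b^2) V^2$. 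By Gauss composition, representations of $N$ by any positive definite integral binary quadratic form are bounded by a constant times $d(|N|) = O_\varepsilon(|N|^\varepsilon)$, which translates to $O_\varepsilon(H(P)^\varepsilon)$ after relabelling $\varepsilon$.

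For part~(\ref{lem2b}) the approach is to extract a lower bound for the quadratic part $Q(x,y) = ax^2 + bxy + cy^2$ in terms of $\max(|x|,|y|)$ and $|\Delta|$. Completing the square yields
\[
 Q(x,y) = a\Bigl(x + \tfrac{b}{2a}y\Bigr)^2 + \frac{|\Delta|}{4a}\, y^2 \geq \frac{|\Delta|}{4a}\, y^2,
\]
and by the symmetric computation $Q(x,y) \geq \frac{|\Delta|}{4c}\, x^2$. Since $a, c \leq H(P)$, this produces $Q(x,y) \geq \frac{|\Delta|}{4 H(P)}\, \max(|x|,|y|)^2$. Writing $P = Q + L + f$ with $L$ linear (coefficients $\leq H(P)$) and $|f| \leq H(P)$, the hypothesis $|P(x,y)| \leq \delta$ gives, with $M := \max(|x|,|y|)$,
\[
 \frac{|\Delta|}{4 H(P)}\, M^2 \leq \delta + 2 H(P)\, M + H(P).
\]
Since $|\Delta| \geq D$, solving this quadratic inequality in $M$ yields the desired bound $M \ll_D (\delta + 1 + H(P))^C$ for some absolute constant $C>0$.

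The main technical nuisance I anticipate is in part~(\ref{lem2a}): I need to check that the reduction preserves positive definiteness (rather than producing a form whose discriminant has the wrong sign), that the resulting form has integer coefficients after clearing the necessary content, and that both the discriminant and the represented integer $N$ stay bounded polynomially in $H(P)$, so that the $O_\varepsilon(|N|^\varepsilon)$ divisor bound genuinely transfers to $O_\varepsilon(H(P)^\varepsilon)$. Once this bookkeeping is done, both parts reduce to essentially standard facts about positive definite binary forms and the divisor function.
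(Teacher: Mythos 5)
Your proposal follows essentially the same route as the paper: both parts rest on completing the square in $P$, reducing (a) to a norm-form equation of discriminant $\Delta$ and size $H(P)^{O(1)}$ solved with the divisor bound (the paper phrases this via ideals in $\Q(\sqrt{\Delta})$), and (b) to a quadratic inequality in $\max(|x|,|y|)$. Minor note: in your displayed identity for (a) the factor $(4ac-b^2)$ should sit in front of $(2ax+by+d)^2$, not in front of the $y$-linear square --- but $(4ac-b^2)U^2 + V^2$ and $U^2 + (4ac-b^2)V^2$ are the same form, so this is precisely the harmless bookkeeping you anticipated.
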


\begin{proof} We write $P(x, y) = ax^2 + bxy + cy^2 + d x + e y + f$ and $\Delta = b^2 - 4ac < 0$ (so in particular $a \not= 0$). We write $\xi = (be - 2cd)/\Delta$ and $\eta = (bd - 2ae)/\Delta$. One checks that
\begin{equation}\label{polyessential}
P(x, y) = \frac{(2a(x + \xi) + b(y + \eta))^2 - \Delta(y + \eta)^2}{4a} + P(-\xi, - \eta).
\end{equation}
Hence $P(x, y) = 0$ implies $X^2 - \Delta Y^2 = -4a \Delta^2 P(-  \xi, -   \eta)$ for certain integers $X, Y$. The number of solutions in $X, Y$ is at most the number of ideals in $\Bbb{Q}(\sqrt{\Delta})$ of norm $-4a\Delta^2P(-\xi, -\eta)$ which is bounded by $6$ times the number of divisors of $|4a\Delta^2P(-\xi, -\eta)|$. The well-known growth bound $d(n)= O_\eps(n^\eps)$  for the divisor function now implies part \eqref{lem2a}.  
 Part \eqref{lem2b} follows from straightforward estimates: first 
\[
 \delta \geq |P(x,y)| \geq \frac{|\Delta| |y+\eta|^2}{4|a|} - |P(-\xi,-\eta)|
\]
implies $|y| \ll_D (\delta + 1 + H(P))^C$. Using this bound in \eqref{polyessential} yields then the claimed bound for $|x|$. \end{proof}

\begin{cor}\label{cor3}
There exists a constant $A>0$ such that for each $\eps, \delta, D>0$ and each quadratic polynomial $P(x,y)\in\R[x,y]$ whose quadratic homogeneous part is positive definite with discriminant $|\Delta|\geq D$  we have
\[
 \#\{ (x,y)\in\Z^2 \mid |P(x,y)|< \delta \} \ll_{D,\eps} Z^\eps + \delta^{1/7} Z^A
\] 
where $Z = \delta + 1 + H(P )$. 
\end{cor}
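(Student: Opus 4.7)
The plan is to approximate $P$ simultaneously by a rational-coefficient polynomial with common denominator $q$, scale up to an integer polynomial $Q$, and then reduce the counting problem to level-set counts for $Q$ that are controlled by Lemma~\ref{lem2}\eqref{lem2a}. To begin, Lemma~\ref{lem2}\eqref{lem2b} confines every lattice point in the counted set to the region $\max(|x|,|y|)\ll_D Z^{C_1}$ for some absolute constant $C_1$, so the remaining problem is one of counting lattice points in a compact box of side $\ll Z^{C_1}$.

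Given a parameter $T\geq 1$ to be fixed later, apply Lemma~\ref{lem1} with $n=6$ to the six coefficients $\alpha_1,\ldots,\alpha_6$ of $P$: this produces a positive integer $q\leq T$ and integers $a_1,\ldots,a_6$ with $|q\alpha_i-a_i|\leq T^{-1/6}$ for all $i$. Defining $Q(x,y)\sceq a_1x^2+a_2xy+a_3y^2+a_4x+a_5y+a_6\in\Z[x,y]$, a term-by-term estimate gives $|qP(x,y)-Q(x,y)|\ll T^{-1/6}Z^{2C_1}$ on the relevant box. The discriminant of the quadratic part of $Q$ equals $q^2\Delta+O(qZT^{-1/6})$, which remains negative with $|\cdot|\gg q^2D$ as soon as $T$ exceeds a threshold depending polynomially on $Z/D$; from this point on Lemma~\ref{lem2}\eqref{lem2a} is applicable to $Q$ and each of its integer translates.

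Consequently, every $(x,y)$ in the counted set satisfies $|Q(x,y)|\leq M$ with $M\sceq q\delta+O_D(T^{-1/6}Z^{2C_1})$, and since $Q(x,y)\in\Z$ its value lies in one of at most $2\lfloor M\rfloor+1$ integers. Summing Lemma~\ref{lem2}\eqref{lem2a} over those integers yields the preliminary estimate
\[
 \#\{(x,y)\in\Z^2\mid |P(x,y)|<\delta\} \ll_{D,\eps} (2M+1)(qZ+M)^{\eps} \ll (2M+1)(TZ)^{\eps}.
\]

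It remains to balance the two contributions to $M$ by an appropriate choice of $T$. Taking $T$ of order $Z^{12C_1/7}\delta^{-6/7}$ (clipped from below by the discriminant threshold) makes both $q\delta$ and $T^{-1/6}Z^{2C_1}$ of size $\ll_D Z^{12C_1/7}\delta^{1/7}$, so the preliminary bound collapses to the required $\delta^{1/7}Z^A$ for a suitable fixed $A$. In the complementary regime where this prescription would force $M<1$, one necessarily has $Q(x,y)=0$ and Lemma~\ref{lem2}\eqref{lem2a} alone gives at most $Z^\eps$ solutions, delivering the $Z^\eps$ term. The principal technical obstacle I anticipate is the bookkeeping: arranging the threshold condition on $T$ to be compatible with the optimization step, and verifying that the factor $(TZ)^\eps$ introduces only an $\eps$-admissible loss that can be absorbed either into $Z^\eps$ on a restricted range of $\delta$ or, at the cost of a slightly larger value of $A$, into the final exponent.
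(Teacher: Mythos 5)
Your approach is essentially the same as the paper's: confine lattice points via Lemma~\ref{lem2}\eqref{lem2b}, approximate the six coefficients of $P$ simultaneously via Dirichlet (Lemma~\ref{lem1}) with common denominator $q\leq T$, scale to an integer polynomial $Q$, and apply Lemma~\ref{lem2}\eqref{lem2a} to the integer level sets of $Q$. The genuine gap is that you do not cap $T$ from above: you let $T\asymp Z^{12C_1/7}\delta^{-6/7}\to\infty$ as $\delta\to 0$. In the regime you call $M<1$ you then conclude that Lemma~\ref{lem2}\eqref{lem2a} ``gives at most $Z^\eps$ solutions,'' but what it actually gives is $\ll_\eps H(Q)^\eps$ with $H(Q)\ll TZ$, and $(TZ)^\eps\asymp Z^{O(\eps)}\delta^{-6\eps/7}$ diverges as $\delta\to0$ for any fixed $\eps$. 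That factor cannot be absorbed into $Z^{\eps'}+\delta^{1/7}Z^{A}$ for any fixed $A$, so the argument breaks down precisely for small $\delta$. The paper resolves this by taking $T=1+\min\bigl(Z^{12C/7}\delta^{-6/7},\,Z^{12C}\bigr)$: the cap guarantees $H(\wt P)\ll TH(P)\ll Z^{O(1)}$ uniformly, so the Lemma~\ref{lem2}\eqref{lem2a} factor is always $Z^{O(\eps)}$, while for $\delta<Z^{-12C}$ the capped choice $T=1+Z^{12C}$ still forces $R\ll 1$, giving the $Z^\eps$ term. Your additional concern about keeping the quadratic part of $Q$ definite (which the paper leaves tacit) is legitimate, but it is the upper cap on $T$ that is indispensable; without it the small-$\delta$ regime fails outright.
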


\begin{proof} Let $(x,y)\in\Z^2$ with $|P(x,y)|<\delta$.   By Lemma \ref{lem2}\eqref{lem2b} we can assume that $\max(|x|, |y|) \ll_D Z^C$. Let $T>1$ be a parameter to be chosen later.  We approximate the six coefficients of $P$ by rational numbers with common denominator $q \leq T$. Using Lemma \ref{lem1}  and multiplying by $q$, we obtain 
\[
|\wt{P}(x, y)| \leq c_D(\delta T + Z^{2C} T^{-1/6}) \seqc R
\]
for some integral polynomial $\wt{P} \in \Z[x, y]$ of height $H(\wt{P}) \ll_D T H(P )$ and a constant $c_D>0$ only depending on $D$. For each integer $r \leq R $ we bound the number of solutions to $\wt{P}(x, y) - r = 0$ by Lemma \ref{lem2}\eqref{lem2a} getting a total of $\ll_{\eps,D}(1+R)  (R+TH( P))^{\varepsilon}$ solutions at most. 
We choose $$T  =  1 + \min(Z^{12C/7}  \delta^{-6/7}, Z^{12C}),$$
so that $1+R \ll_D 1 + \delta + \delta^{1/7}Z^{12C/7}$ and $R+TH( P) = Z^{O(1)}$.  This completes the proof. \end{proof}

We are now prepared for the 
\begin{proof}[Proof of Proposition \ref{prop1}]
We choose $r_1, r_2 \ll_g m^{1/2}$, and substitute the values into the first equation of \eqref{mat1a}. This provides us with the quadratic polynomial
\begin{align*}
 P(r_3,r_4) = q_{44}r_4^2 &+ 2q_{34}r_3r_4 + q_{33}r_3^2 + 2(q_{13}r_1+q_{23}r_2)r_3 + 2(q_{14}r_1 + q_{24}r_2)r_4 
 \\
&+ q_{11}r_1^2 + 2q_{12}r_1r_2 + q_{22}r_2^2 - mq_{11},
\end{align*}
whose quadratic homogeneous part is positive definite and $H(P)\ll_g m$. By \eqref{mat1a}, $|P(r_3,r_4)|\ll_g \delta m$. Hence Corollary \ref{cor3} shows that we have $\ll_{g,\eps} m^{\varepsilon} +(\delta m)^{1/7} m^{A} $ choices for $r_3, r_4$.  Without loss of generality let us assume that $ {r}^{\top} Q_1  {r} \asymp m$. We substitute \eqref{mat4a} into the second equation of \eqref{mat1a} and get a binary quadratic form $\wt P_r(s_2,s_4)$ whose coefficients, and also its discriminant, depend on $r$ (and $g$). Since $Q$ is positive definite, so is $\wt P_r$. Moreover, its shortest vector is trivially bounded below by the shortest vector of $Q$ which is bounded below by a constant depending only on $g$.  Minkowski's lower bound for the  discriminant of a quadratic form by its successive minima \cite[Chapter~12, Theorem~2.2]{Cassels} now shows  that the discriminant of the quadratic homogeneous part of $\wt P_r$ is bounded away from $0$ uniformly in $r$. Clearly, $H(\wt P_r) \ll_g m$. 
Then Corollary \ref{cor3} 
restricts the number of choices for $s_2, s_4$ to $\ll_{g,\eps} m^{\varepsilon} +(\delta m)^{1/7} m^{A}$. Now $s_1, s_3$ and the remaining $8$ entries are determined up to $O_g(\delta m^{1/2})$. This gives a total count for $\#\mathscr{S}(g)_\delta[m]$ of $\ll_{g,\eps} m^{1+\varepsilon} \left(1 + (\delta m )^{2/7}m^{2A}\right)   (1+\delta m^{1/2})^{10}$.
\end{proof}

\section{Appendix: Images of Hecke operators under the Satake map}

Given a double coset $\Gamma \text{diag}(p^a, p^b, p^{r-b}, p^{r-a}) \Gamma$, there exists a decomposition into left cosets
\[
\Gamma M \Gamma = \bigcup_j \Gamma M_j, \quad M_j = \left(\begin{matrix}A_j & \ast \\  & p^r A_j^{-\top} \end{matrix}\right), \quad A_j =  \left(\begin{matrix}p^{\alpha} & \ast\\  & p^{\beta} \end{matrix}\right).
\]
The image of $T_{\Gamma M \Gamma}$ under the Satake map is the polynomial
$$x_0^r \sum_{j} \left(\frac{x_1}{p}\right)^{\alpha}  \left(\frac{x_2}{p^2}\right)^{\beta}\in \C[x_0, x_1, x_2].$$  
This map is an algebra isomorphism between the $p$-part of the integral Hecke algebra and polynomials that are symmetric in $x_1,  x_2$ and invariant under the automorphisms
$$(x_0, x_1, x_2) \mapsto (x_0 x_1, 1/x_1, x_2) \quad \text{and} \quad (x_0, x_1, x_2) \mapsto (x_0 x_2, x_1, 1/x_2).$$

Table~\ref{extension} is compiled using the results of \cite[p.\ 120]{Ko} and comparing coefficients. Following \cite{RSh}, it is most efficient to use symmetrized polynomials. For $\textbf{a} = (a_1, a_2)$ with $0 \leq a_1 \leq a_2 \leq r/2$ we define the Weyl orbit as
\begin{displaymath}
\begin{split}
W_r(a_1, a_2) \sceq \{& (a_1, a_2), (a_2, a_1), (r - a_1, a_2), (a_2, r-a_1),\\
&  (a_1, r-a_2), (r-a_2, a_1), (r - a_1, r- a_2), (r-a_2, r - a_1)\}
\end{split}
\end{displaymath}
and 
$$\textbf{x}_r^{\textbf{a}} = \sum_{(b_1, b_2) \in W_r(a_1, a_2)} x_1^{b_1}x_2^{b_2}.$$

The entries for the Hecke operators of order $3, 4, 5, 6$ do not seem to be in the literature in explicit form and extend the matrix in \cite[p.\@ 238]{RSh}. 

 \newpage

\renewcommand{\arraystretch}{1.5}
\begin{table}[ht]
\begin{tabular}{l|l}
$T^{(1)}_{0, 0}$ & $x_0\textbf{x}_1^{(0, 0)}$ \\ \hline
$T^{(2)}_{0, 0}$ & $x_0^2\left(\textbf{x}_2^{(0, 0)} + \frac{p-1}{p} \textbf{x}_2^{(0, 1)} + \frac{2(p-1)}{p} \textbf{x}_2^{(1, 1)}\right)$\\ 
$T^{(2)}_{0, 1}$ & $x_0^2\left(\frac{1}{p}\textbf{x}_2^{(0, 1)}+ \frac{p^2-1}{p^3}\textbf{x}_2^{(1, 1)}\right)$\\  \hline
$T^{(3)}_{0, 0}$ & $x_0^3\left(\textbf{x}_3^{(0, 0)} + \frac{p-1}{p} \textbf{x}_3^{(0, 1)} + \frac{(p-1)(2p-1)}{p^2} \textbf{x}_3^{(1, 1)}\right)$\\ 
$T^{(3)}_{0, 1}$ & $x_0^3\left(  \frac{1}{p} \textbf{x}_3^{(0, 1)} + \frac{(p-1)(2p+1)}{p^3} \textbf{x}_3^{(1, 1)}\right)$\\ \hline
$T^{(4)}_{0, 0}$ & $x_0^4\left(\textbf{x}_4^{(0, 0)} + \frac{p-1}{p} \textbf{x}_4^{(0, 1)}+ \frac{p-1}{p} \textbf{x}_4^{(0, 2)} + \frac{(p-1)(2p-1)}{p^2} \textbf{x}_4^{(1, 1)} + \frac{2(p-1)^2}{p^2} \textbf{x}_4^{(1, 2)}  + \frac{(p-1)(3p^2 - 2p + 1)}{p^3} \textbf{x}_4^{(2, 2)} \right)$\\ 
$T^{(4)}_{0, 1}$ & $x_0^4\left(  \frac{ 1}{p}   \textbf{x}_4^{(0, 1)} +  \frac{ p-1}{p}   \textbf{x}_4^{(0, 2)} + \frac{2(p-1)}{p^2} \textbf{x}_4^{(1, 1)} + \frac{3(p-1)}{p^2} \textbf{x}_4^{(1, 2)}  + \frac{(p-1)^2(3p + 1)}{p^4} \textbf{x}_4^{(2, 2)} \right)$ \\
$T^{(4)}_{0, 2}$ & $x_0^4\left(  \frac{ 1}{p^2}    \textbf{x}_4^{(0, 2)} + \frac{p-1}{p^3} \textbf{x}_4^{(1, 1)} + \frac{p-1}{p^3} \textbf{x}_4^{(1, 2)}  + \frac{2(p-1) }{p^3} \textbf{x}_4^{(2, 2)} \right)$ \\ \hline
$T^{(5)}_{0, 0}$ & $x_0^5\left(\textbf{x}_5^{(0, 0)} + \frac{p-1}{p} \textbf{x}_5^{(0, 1)}+ \frac{p-1}{p} \textbf{x}_5^{(0, 2)} + \frac{(p-1)(2p-1)}{p^2} \textbf{x}_5^{(1, 1)} + \frac{2(p-1)^2}{p^2} \textbf{x}_5^{(1, 2)}  + \frac{(p-1)(3p^2 - 3p + 1)}{p^3} \textbf{x}_5^{(2, 2)} \right)$\\ 
$T^{(5)}_{0, 1}$ & $x_0^5\left(  \frac{ 1}{p}   \textbf{x}_5^{(0, 1)} +  \frac{ p-1}{p}   \textbf{x}_5^{(0, 2)} + \frac{2(p-1)}{p^2} \textbf{x}_5^{(1, 1)} + \frac{(3p-1)(p-1)}{p^3} \textbf{x}_5^{(1, 2)}  + \frac{(p-1)(4p - 3)}{p^3} \textbf{x}_5^{(2, 2)} \right)$\\ 
$T^{(5)}_{0, 2}$ & $x_0^5\left(  \frac{ 1}{p^2}    \textbf{x}_5^{(0, 2)} + \frac{p-1}{p^3} \textbf{x}_5^{(1, 1)} + \frac{2(p-1)}{p^3} \textbf{x}_5^{(1, 2)}  + \frac{(p-1)(3p-1) }{p^3} \textbf{x}_5^{(2, 2)} \right)$\\ \hline
$T^{(6)}_{0, 0}$ & $\begin{array}{l} x_0^6\left(\textbf{x}_6^{(0, 0)} + \frac{p-1}{p} (\textbf{x}_6^{(0, 1)}+  \textbf{x}_6^{(0, 2)} + \textbf{x}_6^{(0, 3)}) + \frac{(p-1)(2p-1)}{p^2} \textbf{x}_6^{(1, 1)} + \frac{2(p-1)^2}{p^2} (\textbf{x}_6^{(1, 2)}  + \textbf{x}_6^{(1, 3)})\right. \\
 \quad\quad\quad \left. + \frac{(p-1)(3p^2 - 3p + 1)}{p^3} \textbf{x}_6^{(2, 2)}+ \frac{(p-1)^2(  3p - 1)}{p^3} \textbf{x}_6^{(2, 3)}  + \frac{2(p-1)(2p^2 - 2p + 1)}{p^3} \textbf{x}_6^{(3,3)}\right)\end{array}$\\ 
$T^{(6)}_{0, 1}$ & $\begin{array}{l} x_0^6\left(\frac{1}{p}\textbf{x}_6^{(0, 1)} + \frac{p-1}{p^2} (  \textbf{x}_6^{(0, 2)} + \textbf{x}_6^{(0, 3)}) + \frac{2(p-1)}{p^2} \textbf{x}_6^{(1, 1)} + \frac{(p-1)(3p-1)}{p^3} \textbf{x}_6^{(1, 2)}  + \frac{(p-1)(3p-2)}{p^3}  \textbf{x}_6^{(1, 3)}\right. \\
 \quad\quad\quad \left. + \frac{4(p-1)^2}{p^3} \textbf{x}_6^{(2, 2)}+ \frac{(p-1)( 5p^2 - 4p + 1 )}{p^4} \textbf{x}_6^{(2, 3)}  + \frac{(p-1)^2(5p-1)}{p^4} \textbf{x}_6^{(3,3)}\right)\end{array}$
\end{tabular}\\[0.3cm]
\caption{Polynomials for Hecke operators under the Satake map}
\label{extension}
\end{table}




\end{document}